\newtheorem*{thm*}{Theorem}
\newtheorem{theorem}{Theorem}[section]
\newtheorem{lemma}[theorem]{Lemma}
\newtheorem{proposition}[theorem]{Proposition}
\newtheorem{corollary}[theorem]{Corollary}
\theoremstyle{definition}
\newtheorem{definition}[theorem]{Definition}
\newtheorem{construction}[theorem]{Construction}
\newtheorem{warning}[theorem]{Warning}
\newtheorem{remark}[theorem]{Remark}
\begin{document}

\title{Examples of Disk Algebras}
\author{Sanath Devalapurkar, Jeremy Hahn, Tyler Lawson,
Andrew Senger, and Dylan Wilson}
\date{}
\maketitle

\begin{abstract} We produce refinements of the known
multiplicative structures on the Brown-Peterson spectrum $\mathrm{BP}$,
its truncated variants $\mathrm{BP}\langle n\rangle$, Ravenel's spectra $X(n)$, and evenly graded polynomial rings over the sphere spectrum. Consequently, topological Hochschild homology relative to these rings inherits
a circle action.
\end{abstract}

\tableofcontents

\section{Introduction}

If $R$ is a ring spectrum, then the algebraic $K$-theory of $R$ is often understood by means of its trace maps to $\mathrm{TC}^{-}(R)=\mathrm{THH}(R)^{hS^1}$, $\mathrm{TP}(R)=\mathrm{THH}(R)^{tS^1}$, and $\mathrm{TC}(R)$.  To compute any of these invariants, it has proven extremely fruitful to approximate the absolute Hochschild homology 
$\mathrm{THH}(R)$ by Hochschild homology relative to some other base, i.e. perform descent along a map
\[
    \mathrm{THH}(R) \to \mathrm{THH}(R/A).    
\]
For example, this is one of the main ideas behind
the definition of prismatic cohomology of ring
spectra given in \cite{even-filtration}, and is featured in the foundational \cite[\S 11]{bms2}.  Works such as \cite{k-theory-of-Zpn, liu-wang, krause-nikolaus-dvr, lee-thh, hahn-wilson} showcase both the computational and theoretical effectiveness of the technique.

To enact the above strategy, one needs $A$ to admit enough structure that $\mathrm{THH}(R/A)$ exists as an $S^1$-equivariant spectrum.
The action of $S^1$ on $\mathbb{R}^2$ by rotation defines an $S^1$-action on the operad $\mathbb{E}_2$, and hence an $S^1$-action on
the category $\mathsf{Alg}_{\mathbb{E}_2}$ of
$\mathbb{E}_2$-algebras. The 
structure necessary on $A$ to define an $S^1$-action on $\mathrm{THH}(R/A)$ is that of a homotopy
fixed point for this action.\footnote{This appears to be
folklore, but we sketch a short proof due to Asaf Horev in
Corollary \ref{cor:s1-action}.} The category
$\mathsf{Alg}_{\mathbb{E}_2}^{hS^1}$ goes by many
names, such as the category
of framed $\mathbb{E}_2$-algebras,
$\mathbb{E}_2\rtimes S^1$-algebras,
or $\mathbb{E}_{\mathrm{BU}(1)}$-algebras. 
Following \cite{ayala-francis}, we will call these 
\textit{$\mathsf{Disk}_2^{\mathrm{BU}(1)}$-algebras}; more generally, there is a notion of $\mathsf{Disk}_n^B$-algebra
which we review below.

Here, we prove that several familiar and fundamental ring spectra admit extra structure of
this form:
\begin{theorem}[Corollary \ref{cor:xn-disk}, Corollary \ref{cor:cyclotomic-base}, Corollary \ref{cor:bp-disk4}, Theorem \ref{thm:bpn-disk3}]
We have:
\begin{enumerate}
    \item At any prime $p$, $\mathrm{BP}$ admits the structure of a $\mathsf{Disk}_4^{\mathrm{BU}(2)}$-$\mathrm{MU}$-algebra.
     \item At any prime $p$ and for each integer $n \ge 0$, there is a form of $\mathrm{BP}\langle n\rangle$ which is a $\mathsf{Disk}_3^{\mathrm{BU}(1)}$-$\mathrm{MU}$-algebra.
	\item For each integer $n \ge 0$, the Ravenel spectrum $X(n)$ admits the structure of a $\mathsf{Disk}_2^{\mathrm{BU}(1)}$-algebra.
    \item For any integer $n$, the spherical polynomial algebra $\mathbb{S}[x_{2n}]$ on a degree $2n$ class admits the structure of a $\mathsf{Disk}_2^{\mathrm{BU}(1)}$-algebra.
\end{enumerate}
\end{theorem}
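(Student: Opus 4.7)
My plan is to split the theorem into two halves: parts (3) and (4) are amenable to a Thom-spectrum computation, while parts (1) and (2) require an obstruction-theoretic cell-attachment argument over $\mathrm{MU}$. I expect the latter to constitute the bulk of the work.

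For part (3), I would present $X(n)$ as the Thom spectrum of the canonical map $f_n \colon \Omega\mathrm{SU}(n) \to \mathrm{BU}$. Since $\Omega\mathrm{SU}(n)$ is a double loop space, this map is naturally one of $\mathbb{E}_2$-spaces, so $X(n)$ inherits an $\mathbb{E}_2$-algebra structure via a Thom-spectrum-of-$\mathbb{E}_2$-maps functoriality. To promote this to a $\mathsf{Disk}_2^{\mathrm{BU}(1)}$-structure, it suffices to produce an $S^1$-action on $\Omega\mathrm{SU}(n)$ which (a) assembles with the $\mathbb{E}_2$-structure into a framed $\mathbb{E}_2$-action, and (b) makes $f_n$ equivariant for a compatible $S^1$-action on $\mathrm{BU}$. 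The natural candidate is loop rotation, and equivariance of $f_n$ should follow by presenting it via a manifestly rotation-equivariant classifying construction. Part (4) proceeds in parallel: one recognizes $\mathbb{S}[x_{2n}]$ as a Thom spectrum attached to a rotation-equivariant map out of an appropriate framed $\mathbb{E}_2$-space carrying a single degree-$2n$ class, and applies the same equivariance argument.

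For parts (1) and (2), I would iteratively build $\mathrm{BP}$ and $\mathrm{BP}\langle n\rangle$ out of $\mathrm{MU}$ by attaching cells in the category of $\mathsf{Disk}_d^{\mathrm{BU}(k)}$-$\mathrm{MU}$-algebras, with $(d,k) = (4,2)$ and $(3,1)$ respectively. One begins with $\mathrm{MU}$, which is such an algebra trivially, and inductively kills the unwanted classes in $\pi_*\mathrm{MU}$: the non-$p$-typical generators in the case of $\mathrm{BP}$, and additionally the $v_i$ for $i > n$ in the case of $\mathrm{BP}\langle n\rangle$. Each cell attachment is controlled by an obstruction lying in an appropriate framed topological Quillen cohomology group, in the spirit of the $\mathbb{E}_3$-refinement of $\mathrm{BP}\langle n \rangle$ in \cite{hahn-wilson}. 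The main obstacle will be ensuring that these obstructions vanish in a strong enough range: for bare $\mathbb{E}_d$-cell attachments, sparsity of $\pi_*\mathrm{MU}$ (even, polynomial) together with a standard connectivity estimate typically suffices, but in the framed setting there is additional structure from the $\mathrm{BU}(k)$-tangential data, and one must verify that the $\mathrm{BU}(k)$-twisted obstruction groups still vanish. I would expect to achieve this via a spectral sequence comparing the framed and unframed operadic cohomologies, exploiting the evenness of $\pi_*\mathrm{MU}$ and the fact that the $\mathrm{BU}(k)$-action contributes in even degrees only.
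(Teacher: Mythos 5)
Your outline for part (2) matches the paper's: one reruns the Hahn--Wilson $\mathbb{E}_3$ obstruction theory with the cotangent complex now living in $\mathsf{Mod}^{\mathsf{Disk}_3^{\mathrm{BU}(1)}}_{\mathrm{BP}\langle n\rangle}$, and the framed obstruction groups are exactly $\mathrm{U}(1)$-homotopy fixed points of (representation-sphere twists of) the unframed mapping spectra, which are even by \cite[Theorem 2.5.5]{hahn-wilson}, so the homotopy fixed point spectral sequence collapses. But for part (1) your approach is not the paper's and, as described, would not go through: the paper does \emph{not} build $\mathrm{BP}$ by attaching $\mathsf{Disk}_4^{\mathrm{BU}(2)}$-cells to $\mathrm{MU}_{(p)}$ to kill the non-$p$-typical generators. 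Killing a class by an $\mathbb{E}_4$- (or framed $\mathbb{E}_4$-) cell attachment does not produce a spectrum with homotopy $\pi_*\mathrm{MU}_{(p)}/(x)$ --- the pushout is computed by a bar construction and is much larger --- and there is no known $\mathbb{E}_4$-analogue of the evenness theorem for enveloping algebras that powers the $\mathbb{E}_3$ argument; this is precisely why \cite{hahn-wilson} is an $\mathbb{E}_3$ statement. Instead the paper exploits that $\mathrm{BP}$ is already an $\mathbb{E}_4$-\emph{retract} of $\mathrm{MU}_{(p)}$ via the Basterra--Mandell idempotent \cite{basterra-mandell}, and proves that \emph{every} $\mathbb{E}_4$-self-map of $\mathrm{MU}_{(p)}$ refines to a $\mathsf{Disk}_4^{\mathrm{BU}(2)}$-map: the refinement question is surjectivity of $\pi_0$ of a $\mathrm{U}(2)$-homotopy fixed point inclusion, and the fixed point spectral sequence collapses because $\mathrm{Map}_{\mathbb{E}_4}(\mathrm{MU}_{(p)},\mathrm{MU}_{(p)})\simeq \mathrm{Map}(\Sigma^{-4}\Sigma^\infty_+\mathrm{BU}\langle 6\rangle,\mathrm{gl}_1\mathrm{MU}_{(p)})$ has even homotopy. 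The missing idea in your plan is this retract-plus-evenness mechanism.

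For parts (3) and (4) you have the right objects but skip the step that carries the actual content. Making $\Omega\mathrm{SU}(n)\to\mathrm{BU}$ ``rotation-equivariant'' is not enough: to get a $\mathsf{Disk}_2^{\mathrm{BU}(1)}$-algebra Thom spectrum you need the composite to $\mathrm{Pic}(\mathbb{S})$ to be a map of $\mathsf{Disk}_2^{\mathrm{BU}(1)}$-algebras, where the target carries the constant (restricted $\mathbb{E}_\infty$) structure, and the rotation action on the delooped target is a priori nontrivial (cf.\ the paper's warning that the $\Omega B$-action may be nontrivial). The paper's key lemma (Proposition \ref{prop:disk-and-orient} and Corollary \ref{cor:alg-over-bu}) is an orientability statement: a Thom class gives a $\mathrm{U}(1)$-equivariant equivalence $\Sigma^{\lambda_2}\mathrm{ku}\simeq\Sigma^2\mathrm{ku}$ (using that $\mathrm{ku}$ is an $\mathrm{MU}$-module), so \emph{any} group-like $\mathbb{E}_2$-map to $\mathrm{BU}\times\mathbb{Z}$ canonically refines, with no equivariance to check on the source; this is what proves Corollary \ref{cor:xn-disk}. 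For (4), your phrase ``framed $\mathbb{E}_2$-space carrying a single degree-$2n$ class'' does not pin down a construction: the free $\mathbb{E}_2$-ring on a degree-$2n$ class is much bigger than $\mathbb{S}[x_{2n}]$, and $\mathbb{N}$ is not group-like, so the delooping trick cannot be applied directly. The paper instead works in graded spectra, shows the shearing functor $\mathrm{sh}_n$ is $\mathsf{Disk}_2^{\mathrm{BU}(1)}$-monoidal (by factoring $\iota_n$ through the group-like $\mathbb{E}_2$-map $\mathbb{Z}^{\mathrm{ds}}\to\mathrm{BU}\times\mathbb{Z}^{\mathrm{ds}}$ and again invoking the orientation trick), and applies it to the $\mathbb{E}_\infty$-ring $\mathbb{S}[x_{0,1}]$.
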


\begin{remark}
There has been a long history of work equipping the above ring spectra with highly structured multiplications. For example, Basterra--Mandell \cite{basterra-mandell} proved that $\mathrm{BP}$ admits a unique $\mathbb{E}_{4}$-algebra structure, and in \cite{hahn-wilson} the second and fifth authors show that there are $\mathbb{E}_3$-$\mathrm{MU}$-algebra forms of $\mathrm{BP}\langle n \rangle$.  The above theorem strengthens these results, and can be seen as part of the general effort to equip ring spectra with the maximum possible amount of structure.
\end{remark}

\subsection*{Acknowledgements}
The authors would like to thank 
Andrew Blumberg,
Michael Hill,
Michael Hopkins,
Achim Krause,
Michael Mandell,
and
Thomas Nikolaus
for helpful conversations. We thank Ferdinand Wagner for pointing out that $\mathrm{THH}(A/B)$ does not generally carry a $B$-module structure when $B$ is $\mathsf{Disk}_2$, and we thank Asaf Horev and Alexander Kupers for discussions about Remark 2.2 and Corollary 2.8.
During the course of this work, Sanath Devalapurkar was supported by NSF grant DGE-2140743, Jeremy Hahn was supported by NSF grant DMS-1803273, Andrew Senger was supported by NSF grant DMS-2103236, Tyler Lawson was supported by NSF grant DMS-2208062, and Dylan Wilson was supported by NSF grant DMS-1902669.
This project was made possible by the hospitality of AIM.


\section{Review of Disk Algebras}

\subsection{Definitions} We recall the algebraic setup
from \cite{ayala-francis}.

\begin{definition} Let $B \in \mathsf{Spaces}_{/\mathrm{BTop}(n)}$.
Then $\mathsf{Disk}_n^B$ \cite[Definition 2.9]{ayala-francis}
is the symmetric monoidal ($\infty$-)category of 
$n$-manifolds homeomorphic to finite disjoint unions
of $n$-dimensional Euclidean spaces equipped with
a lift of the classifer of their tangent microbundle to $B$.
The symmetric monoidal structure is given by disjoint union.
The category of $\mathsf{Disk}_n^B$-algebras in a 
symmetric monoidal category $\mathcal{C}$ is defined
as
	\[
	\mathsf{Alg}_{\mathsf{Disk}_n^B}(\mathcal{C}):=
	\mathsf{Fun}^{\otimes}(\mathsf{Disk}_n^B, \mathcal{C}).
	\]
\end{definition}

\begin{remark}\label{rmk: disk algebras as a limit} The symmetric monoidal category
$\mathsf{Disk}_n^B$ is the symmetric monoidal
envelope of the $\infty$-operad $\mathbb{E}_B$
of \cite[5.4.2.10]{ha}. Combining 
\cite[2.2.4.9, 2.3.3.4]{ha}, we learn that the map $B \to \mathrm{BTop}(n)$
produces a local system of categories of $\mathbb{E}_n$-algebras
and that there is an equivalence:
	\[
	\mathsf{Alg}_{\mathsf{Disk}_n^B}(\mathcal{C})
	\simeq \lim_B \mathsf{Alg}_{\mathbb{E}_n}(\mathcal{C}).
	\]
Note that this depends on \cite[Remark 2.3.3.4]{ha}, which is stated without proof; recently, detailed proofs appeared as \cite[Proposition 1.2]{arakawa} and \cite[Theorem 2.2]{KrannichKupers}.
\end{remark}

\subsection{Disk algebras in spaces} Given any
pointed space $X$, the functor of compactly-supported
maps
	\[
	\mathrm{Map}_c(-, X):
	\mathsf{Disk}_n \to 
	\mathsf{Spaces}
	\]
is symmetric monoidal for the structure of disjoint union
on the source and cartesian product on the target.
Observe that, upon restriction to the full subcategory spanned
by $\mathbb{R}^n$, we obtain the local system
	\[
	\mathrm{BTop}(n) \to \mathsf{Spaces}
	\]
associated to the action of $\mathrm{Top}(n)$
on $\mathrm{Map}_c(\mathbb{R}^n, X) = \Omega^nX$.
We will denote this local system by $\Omega^{\lambda_n}X$.

\begin{proposition} The above construction refines
to an adjunction
	\[
	\xymatrix{
	\mathsf{Alg}_{\mathsf{Disk}_n^B}(\mathsf{Spaces})
	\ar@<.5ex>[r]^-{\mathrm{B}^{\lambda_n}} &
	\mathsf{Psh}(B)_* \ar@<.5ex>[l]^-{\Omega^{\lambda_n}}.
	}
	\]
This restricts to an equivalence between
group-like algebras and pointwise $n$-connective presheaves.
\end{proposition}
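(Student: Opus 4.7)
The plan is to reduce to the classical May--Boardman-Vogt recognition theorem for $\mathbb{E}_n$-algebras via the limit formula of Remark~\ref{rmk: disk algebras as a limit}. Classical recognition furnishes an adjunction
\[
    \mathrm{B}^n : \mathsf{Alg}_{\mathbb{E}_n}(\mathsf{Spaces}) \rightleftarrows \mathsf{Spaces}_* : \Omega^n
\]
restricting to an equivalence between grouplike $\mathbb{E}_n$-algebras and $n$-connective pointed spaces; the task is to promote this to a $\mathrm{Top}(n)$-equivariant statement and take limits over $B$.

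The right adjoint $\Omega^n X = \mathrm{Map}_c(\mathbb{R}^n, X)$ admits a manifest $\mathrm{Top}(n)$-equivariant refinement, since it is covariantly functorial in homeomorphisms of $\mathbb{R}^n$; by construction, this matches the $\mathrm{Top}(n)$-action on $\mathsf{Alg}_{\mathbb{E}_n}(\mathsf{Spaces})$ used to define the local system $\Omega^{\lambda_n}X$ above. Invoking the general principle that a $G$-equivariant right adjoint between $G$-$\infty$-categories has a $G$-equivariant left adjoint (formalizable using relative adjunctions of Cartesian fibrations, as in \cite[\S 7.3]{ha}), one obtains a canonical $\mathrm{Top}(n)$-equivariant refinement of $\mathrm{B}^n$. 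Pulling this equivariant adjunction back along $B \to \mathrm{BTop}(n)$ and taking the limit, using Remark~\ref{rmk: disk algebras as a limit} together with the identification $\mathsf{Psh}(B)_* \simeq \lim_B \mathsf{Spaces}_*$, produces the adjunction $\mathrm{B}^{\lambda_n} \dashv \Omega^{\lambda_n}$.

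For the equivalence on the relevant subcategories, I would observe that both grouplike-ness for a $\mathsf{Disk}_n^B$-algebra and pointwise $n$-connectivity for a presheaf are pointwise conditions on sections of the respective limit diagrams. The classical recognition principle guarantees that the unit is an equivalence on grouplike inputs and the counit is an equivalence on $n$-connective inputs, fiberwise over $B$. Since equivalences in a limit $\lim_B$ are detected pointwise, these fiberwise equivalences assemble to the claimed equivalence between grouplike algebras and pointwise $n$-connective presheaves. The main subtlety I anticipate is making the $\mathrm{Top}(n)$-equivariant structure on the classical adjunction sufficiently precise to take $\infty$-categorical limits; this can be addressed either abstractly via the relative adjoint functor theorem, or concretely by producing $\mathrm{B}^n$ through a manifestly equivariant geometric model such as factorization homology over $S^n$ with its tautological $\mathrm{Top}(n)$-action.
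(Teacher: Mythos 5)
Your proposal is correct and follows essentially the same route as the paper: the paper likewise constructs $\Omega^{\lambda_n}$ as a map of presheaves of categories over $B$ (constant source $\mathsf{Spaces}_*$, target the local system of $\mathbb{E}_n$-algebras), takes global sections, and obtains the left adjoint and the grouplike/$n$-connective equivalence as a formal consequence of the classical recognition principle applied pointwise on $B$. Your extra remarks on making the $\mathrm{Top}(n)$-equivariance of the classical adjunction precise (fiberwise left adjoints passing through the limit, automatic over an $\infty$-groupoid) simply fill in what the paper compresses into ``a formal consequence of the known statement applied pointwise on $B$.''
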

\begin{proof} In the discussion above we produced a functor
	\[
	\mathsf{Spaces}_* \to \mathsf{Alg}_{\mathsf{Disk}_n^B}(
	\mathsf{Spaces}) = \lim_B \mathsf{Alg}_{\mathbb{E}_n}(
	\mathsf{Spaces}).
	\]
given by $X \mapsto \Omega^{\lambda_n}X$. 
This is the same data as a map
	\[
	\mathsf{Spaces}_* \to \mathsf{Alg}_{\mathbb{E}_n}(\mathsf{Spaces})
	\]
of presheaves on
$B$, where the source is regarded as a constant presheaf.
Taking global sections then produces the desired functor
$\Omega^{\lambda_n}$. The existence of a left adjoint
and the restricted equivalence is a formal consequence
of the known statement applied pointwise on $B$.
\end{proof}

We will also need the following computation.

\begin{lemma} Suppose $X= \Omega^{\infty}M$
is an infinite-loop space given the structure of
a $\mathsf{Disk}_{n}^B$-algebra by restriction.
Then $\mathrm{B}^{\lambda_n}X = \Omega^{\infty}\Sigma^{\lambda_n}M$.
\end{lemma}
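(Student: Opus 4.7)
The plan is to use the equivalence between group-like $\mathsf{Disk}_n^B$-algebras and pointwise $n$-connective pointed presheaves from the preceding proposition. Since $X = \Omega^\infty M$ is group-like, verifying the identity $\mathrm{B}^{\lambda_n} X \simeq \Omega^\infty \Sigma^{\lambda_n} M$ reduces to two claims: that $\Omega^\infty \Sigma^{\lambda_n} M$ is pointwise $n$-connective, and that $\Omega^{\lambda_n}(\Omega^\infty \Sigma^{\lambda_n} M) \simeq X$ as $\mathsf{Disk}_n^B$-algebras. Assuming $M$ is connective (the statement is only sensible under this interpretation, as $\Omega^\infty M$ sees only $\tau_{\geq 0} M$), the first claim is immediate: each fiber $\Sigma^{\lambda_n(b)} M$ is an $n$-connective spectrum, so its $\Omega^\infty$ is $n$-connective.

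For the second claim, I would use the limit description $\mathsf{Alg}_{\mathsf{Disk}_n^B}(\mathsf{Spaces}) \simeq \lim_B \mathsf{Alg}_{\mathbb{E}_n}(\mathsf{Spaces})$ of Remark \ref{rmk: disk algebras as a limit} to reduce the problem to producing a natural equivalence of functors $\mathrm{BTop}(n) \to \mathsf{Alg}_{\mathbb{E}_n}(\mathsf{Spaces})$ between $V \mapsto \Omega^V \Omega^\infty \Sigma^V M$ and the constant functor with value $\Omega^\infty M$. Pointwise at each $V$, the unit of the loop-suspension adjunction $M \to \Omega^V \Sigma^V M$ is an equivalence of spectra (since $\Sigma^V$ is an invertible endofunctor of $\mathsf{Sp}$), and applying $\Omega^\infty$, which commutes with $\Omega^V$, yields the required pointwise equivalence $\Omega^\infty M \simeq \Omega^V \Omega^\infty \Sigma^V M$.

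The main obstacle is promoting this pointwise identification to a natural equivalence on all of $\mathrm{BTop}(n)$, i.e., compatibly with the $\mathrm{Top}(n)$-action on $V$, and at the level of $\mathbb{E}_n$-algebra structures. The $\mathrm{Top}(n)$-equivariance follows from the fact that both $\Sigma^V$ and the loop-suspension unit $\mathrm{id}_{\mathsf{Sp}} \to \Omega^V \Sigma^V$ are constructed functorially in $V$ via the (invertible) Thom-spectrum formation. The $\mathbb{E}_n$-algebra structures match because the $\mathbb{E}_V$-algebra structure on $\Omega^V \Omega^\infty \Sigma^V M$ coming from its realization as $V$-fold loops of the pointed space $\Omega^\infty \Sigma^V M$ agrees, under the pointwise identification, with the $\mathbb{E}_V$-structure on $\Omega^\infty M$ restricted from $\mathbb{E}_\infty$: this is precisely the statement that $\Omega^\infty \Sigma^V M$ is an $\mathbb{E}_V$-delooping of $\Omega^\infty M$. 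Pulling back the resulting equivalence along $\lambda_n: B \to \mathrm{BTop}(n)$ and passing to the limit then produces the desired equivalence of $\mathsf{Disk}_n^B$-algebras, completing the proof.
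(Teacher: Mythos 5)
Your overall reduction is the right one (and matches what the paper does implicitly): since $X$ is group-like, it suffices by the preceding proposition to exhibit a pointwise $n$-connective presheaf $Z$ with $\Omega^{\lambda_n}Z \simeq X$ \emph{as $\mathsf{Disk}_n^B$-algebras}, and to take $Z = \Omega^\infty\Sigma^{\lambda_n}M$. Your connectivity caveat is also well taken (the paper leaves it implicit). The gap is in the second claim, which is the entire content of the lemma. You correctly establish two things: (i) the underlying equivalence of local systems of spaces $\Omega^\infty M \simeq \Omega^{V}\Omega^\infty\Sigma^{V}M$ is natural in $V$, via functoriality of $S^V\otimes -$ and of the unit $M \to \Omega^V\Sigma^V M$; and (ii) for each fixed $V$, the two $\mathbb{E}_V$-structures (restricted from $\mathbb{E}_\infty$, versus $V$-fold loops of $\Omega^\infty\Sigma^V M$) agree, by the recognition principle. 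But (i) and (ii) together do not produce an equivalence of sections of the local system of categories $\mathsf{Alg}_{\mathbb{E}_n}(\mathsf{Spaces})$ over $B$ (equivalently, of objects of $\lim_B\mathsf{Alg}_{\mathbb{E}_n}(\mathsf{Spaces})$): the forgetful functor from $\mathbb{E}_n$-algebras in spaces to spaces is far from fully faithful, so pointwise algebra-level lifts of a coherent equivalence of underlying presheaves need not themselves cohere. The delooping identification at each $V$ is extra data, and its compatibility with the $\mathrm{Top}(n)$-family (i.e.\ with the twisting over $B$) is precisely the twisted statement the lemma asserts; your sentence claiming $\mathrm{Top}(n)$-equivariance only addresses the spectrum-/space-level maps, not the comparison of algebra structures. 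Since the untwisted, pointwise statement is classical, this coherence is where the work lives.

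The paper closes exactly this gap by a different mechanism: it lifts $Y \mapsto \Omega^{\lambda_n}Y$ to a functor $\Pi^{\lambda_n}$ landing in $\mathsf{Disk}_n^B$-algebras in $\mathsf{Sp}^{\times}$, and uses stability (cartesian $=$ cocartesian monoidal structure, \cite[2.4.3.8]{ha}) to conclude that the forgetful functor to $\mathsf{Fun}(B,\mathsf{Sp})$ is an equivalence, so the two candidate $\mathsf{Disk}_n^B$-algebra structures on $\Omega^{\lambda_n}\Sigma^{\lambda_n}M \simeq M$ coincide for free; applying the product-preserving functor $\Omega^\infty$ then yields the coherent identification $\Omega^{\lambda_n}\Omega^\infty\Sigma^{\lambda_n}M \simeq X$ of $\mathsf{Disk}_n^B$-algebras in spaces. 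To repair your argument you should either route through this rigidity-in-spectra step (or some equivalent device making the algebra structure property-like rather than data), since as written the key naturality of the $\mathbb{E}_n$-structure comparison is asserted rather than proved.
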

\begin{proof} As in the previous proposition,
observe that the construction $Y \mapsto \Omega^{\lambda_n}Y$ refines to a functor
(which we temporarily give alternative notation)
    \[
    \Pi^{\lambda_n}: \mathsf{Fun}(B,\mathsf{Sp})
    \to \mathsf{Alg}_{\mathsf{Disk}_n^B}(\mathsf{Sp}^{\times})
    \]
that intertwines $\Omega^{\infty}$. Here we have
decorated $\mathsf{Sp}$ with $\times$ to indicate
that we are using the cartesian monoidal structure.
Since $\mathsf{Sp}$ is stable, this coincides
with the cocartesian monoidal structure and thus by
\cite[2.4.3.8]{ha}
the forgetful functor
    \[
    \mathsf{Alg}_{\mathsf{Disk}_n^B}(\mathsf{Sp}^{\times})
    \to 
    \mathsf{Fun}(B,\mathsf{Sp})
    \]
is an equivalence. By design, the composite
of $\Pi^{\lambda_n}$ with this forgetful functor
is $\Omega^{\lambda_n}$. In other words: the two potentially different (additive) $\mathsf{Disk}_n^B$-algebra structures on the spectrum
$\Omega^{\lambda_n}Y$ must coincide for any
local system of spectra on $B$. 

It then follows that
$\Omega^{\lambda_n}\Omega^{\infty}\Sigma^{\lambda_n}M
\simeq X$ as $\mathsf{Disk}_n^B$-algebras, which proves the result.
\end{proof}

\begin{warning} If $X$ is an $\mathbb{E}_{\infty}$-space
then, when regarded as a $\mathsf{Disk}_n^B$-algebra,
the underlying presheaf on $B$ is constant.
However, the presheaf $\mathrm{B}^{\lambda_{n}}X$
need not be constant. For example, if $X = \mathbb{Z}$
and $B = \mathrm{BO}(1)$, then $\mathrm{B}^{\lambda_1}\mathbb{Z}
= S^{\lambda_1}$ is the one-point compactification of the
sign representation. This is not (Borel) equivariantly
trivial, as seen, for example, from its integral homology.
\end{warning}

\subsection{Factorization homology}

Recall from \cite{ayala-francis} that $\mathsf{Mfld}_n^B$
denotes the symmetric monoidal
($\infty$-)category of $B$-framed manifolds,
which contains $\mathsf{Disk}_n^B$ as a full subcategory.

\begin{definition} Let $A$ be a $\mathsf{Disk}_n^B$-algebra in a presentably
symmetric monoidal $\infty$-category $\mathcal{C}$. We define
the factorization homology functor
	\[
	\int_{(-)}A : \mathsf{Mfld}^B_n \to \mathcal{C}
	\]
by left Kan extension along the inclusion
$\mathsf{Disk}_n^B \hookrightarrow \mathsf{Mfld}_n^B$.
\end{definition}

This functor gives a generalization of Hochschild homology
by the following theorem.

\begin{theorem}[Ayala-Francis, Lurie]
If $A$ is a $\mathsf{Disk}_1^B$-algebra in $\mathcal{C}$, then
there is a canonical, $S^1$-equivariant equivalence
	\[
	\int_{S^1}A \simeq \mathrm{HH}(A),
	\]
where the latter is defined via the cyclic bar construction in
$\mathcal{C}$. 
\end{theorem}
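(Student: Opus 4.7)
The plan is to deduce the result from the $\otimes$-excision theorem of Ayala--Francis, together with the functoriality of $\int_{(-)} A$ in $B$-framed self-diffeomorphisms.

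First I would apply excision to the collar-gluing $S^1 \simeq \mathbb{R} \cup_{\mathbb{R} \sqcup \mathbb{R}} \mathbb{R}$, obtaining an equivalence
\[
\int_{S^1} A \simeq \int_{\mathbb{R}} A \otimes_{\int_{\mathbb{R} \sqcup \mathbb{R}} A} \int_{\mathbb{R}} A.
\]
Since $\int_{\mathbb{R}} A \simeq A$ and $\int_{(-)} A$ is symmetric monoidal, the right-hand side identifies with the two-sided bar construction $A \otimes_{A \otimes A^{\mathrm{op}}} A$, which is by definition the cyclic bar complex $\mathrm{HH}(A)$. One needs to be mildly careful with the $B$-framings on the two collars to determine which factor is the opposite algebra, but this reduces to a routine check once an orientation convention is fixed for the decomposition.

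To produce the $S^1$-action I would use the fact that $\int_{(-)} A$ is a functor on $\mathsf{Mfld}_1^B$, so the $B$-framed diffeomorphism group of $S^1$ acts on $\int_{S^1} A$. The rotation action of $S^1$ on itself is by isometries and in particular preserves any tangential refinement, so it factors through the $B$-framed diffeomorphism group and yields the sought $S^1$-action on $\int_{S^1} A$.

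The main obstacle will be matching this geometric rotation action with the Connes cyclic $S^1$-action on $\mathrm{HH}(A)$. My approach would be to refine the collar-gluing into a cyclic decomposition: subdividing $S^1$ into $n{+}1$ arcs meeting at $n{+}1$ points, and letting $n$ vary, assembles into a cyclic object in $\mathsf{Mfld}_1^B$ whose image under $\int_{(-)} A$ is the standard cyclic bar object of $A$, and whose colimit recovers the factorization homology of $S^1$ by iterated excision. The tautological $S^1$-action on the realization of any cyclic object then matches rotation under this identification, so the two $S^1$-actions agree. The only nonformal input beyond Ayala--Francis excision is the classical equivalence between cyclic spaces and $S^1$-spaces.
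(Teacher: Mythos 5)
The paper does not actually prove this statement: it is recalled as a known theorem and attributed to Ayala--Francis and Lurie, so the only ``proof'' in the paper is a citation. Your outline is essentially the standard argument from those sources: $\otimes$-excision for the collar-gluing of $S^1$ by two intervals identifies $\int_{S^1}A$ with $A\otimes_{A\otimes A^{\mathrm{op}}}A$, i.e.\ with the cyclic bar construction, and the $S^1$-action comes from functoriality of $\int_{(-)}A$ on $\mathsf{Mfld}_1^B$ applied to rotations. In that sense the proposal is correct in outline and is not a different route from what the paper is implicitly invoking.

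Two points deserve sharpening. First, your claim that rotation ``preserves any tangential refinement'' is too strong: an arbitrary $B$-structure on $S^1$ need not be rotation-invariant. What one uses is the canonical $B$-structure obtained from the rotation-invariant framing of $S^1$ together with a point of $B$ (the tangent classifier of $S^1$ is canonically trivialized, so the structure factors through a point of $B$), and it is this invariant structure for which the rotation action on $\int_{S^1}A$ is defined. Second, the comparison of the geometric rotation action with the Connes action on the cyclic bar construction is the real content of the theorem, and it requires more than ``iterated excision'': one must show that the (para)cyclic category of subdivisions of the circle maps to the relevant category of disk refinements of $S^1$ by a final functor, so that the colimit computing $\int_{S^1}A$ is identified, compatibly with the circle actions, with the geometric realization of the cyclic bar object. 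You do name the right remaining input (the equivalence between cyclic objects and objects with $S^1$-action), and this finality argument is precisely what is carried out in the cited references (Lurie's treatment of topological Hochschild homology and Ayala--Francis's factorization homology papers), so your sketch is a faithful, if compressed, reconstruction of the standard proof rather than a new one.
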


Encoding factorization homology as a functor on
$B$-framed manifolds now allows us to equip relative
Hochschild homology with a circle action.

The following argument is due to Asaf Horev in private communication.

\begin{corollary}[Horev]\label{cor:s1-map}
    If $A$ is a $\mathsf{Disk}_{n+2}^{\mathrm{BU}(1)}$-algebra in $\mathcal{C}$ (so that $A$ admits an $S^1 = \mathrm{U}(1)$-action), then there is
a natural $S^1$-equivariant $\mathbb{E}_n$-$\mathrm{HH}(A)$-algebra structure on $A$.
\end{corollary}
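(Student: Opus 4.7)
The plan is to view $A$ as a $\mathsf{Disk}_2^{\mathrm{BU}(1)}$-algebra in the symmetric monoidal $\infty$-category $\mathcal{D} := \mathsf{Alg}_{\mathbb{E}_n}(\mathcal{C})$ of $\mathbb{E}_n$-algebras, and then to apply the preceding Ayala--Francis/Lurie theorem relative to $\mathcal{D}$. The first step is to establish a $\mathrm{BU}(1)$-framed Dunn additivity
\[
    \mathsf{Alg}_{\mathsf{Disk}_{n+2}^{\mathrm{BU}(1)}}(\mathcal{C}) \simeq \mathsf{Alg}_{\mathsf{Disk}_2^{\mathrm{BU}(1)}}(\mathcal{D}).
\]
The structure map $\mathrm{BU}(1) \to \mathrm{BTop}(n+2)$ implicit in the corollary factors as $\mathrm{BU}(1) \times \ast \to \mathrm{BTop}(2) \times \mathrm{BTop}(n) \to \mathrm{BTop}(n+2)$, with trivial action on the $\mathrm{BTop}(n)$-factor. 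Combining the limit presentation of Remark~\ref{rmk: disk algebras as a limit} with Lurie's classical Dunn additivity $\mathbb{E}_{n+2} \simeq \mathbb{E}_2 \otimes \mathbb{E}_n$ should then yield the displayed equivalence by passing to the limit over $\mathrm{BU}(1)$ of the fiberwise Dunn equivalences.

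Granting this, restriction along the inclusion $\mathbb{E}_1 = \mathsf{Disk}_1^{\mathrm{fr}} \hookrightarrow \mathsf{Disk}_2^{\mathrm{BU}(1)}$ (sending a framed $1$-disk to its product with $\mathbb{R}$) makes $A$ into an $\mathbb{E}_1$-algebra in $\mathcal{D}$, and the preceding theorem produces a canonical $S^1$-equivariant equivalence $\int_{S^1} A \simeq \mathrm{HH}(A)$ in $\mathcal{D}$. This exhibits $\mathrm{HH}(A)$ as an $S^1$-equivariant $\mathbb{E}_n$-algebra in $\mathcal{C}$. To package the $\mathbb{E}_n$-$\mathrm{HH}(A)$-algebra structure on $A$ itself, I would compute $\int_{(-)}A$ in $\mathcal{D}$ on the $\mathrm{BU}(1)$-framed cylinder $S^1 \times \mathbb{R}$: the open embeddings assembling finite disjoint unions of copies of $\mathbb{R}^2$ and $S^1 \times \mathbb{R}$ into $S^1 \times \mathbb{R}$ furnish the operadic compatibilities between $A \simeq \int_{\mathbb{R}^2}A$ and $\mathrm{HH}(A) \simeq \int_{S^1 \times \mathbb{R}}A$ that encode the desired structure, and the $\mathrm{U}(1)$-rotation on the framing of the cylinder supplies the $S^1$-equivariance.

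The main technical obstacle is the first step, namely the $\mathrm{BU}(1)$-framed Dunn additivity. While it is morally clear from Remark~\ref{rmk: disk algebras as a limit} combined with classical Dunn additivity, carefully interchanging the pointwise Dunn equivalences with the limit over $\mathrm{BU}(1)$, and verifying that the resulting $\mathrm{BU}(1)$-action on the right-hand side matches the one implicit in the notation $\mathsf{Disk}_{n+2}^{\mathrm{BU}(1)}$, will require genuine care. Once this is in place, the rest amounts to a direct application of the preceding theorem combined with standard manipulations of factorization homology on the cylinder.
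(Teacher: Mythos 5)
Your first half matches the paper: the reduction via (framed) Dunn additivity to the case $n=0$, i.e.\ viewing $A$ as a $\mathsf{Disk}_2^{\mathrm{BU}(1)}$-algebra in $\mathsf{Alg}_{\mathbb{E}_n}(\mathcal{C})$, is exactly the paper's first step, and identifying $\int_{S^1\times(0,1)}A\simeq \mathrm{HH}(A)$ (with its $S^1$-action) is also common to both arguments. Your caution about making the $\mathrm{BU}(1)$-equivariant Dunn additivity precise is fair, but the paper takes the same step for granted, so that is not where the arguments diverge.

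The genuine gap is in your final step. Factorization homology is covariant in embeddings, so every map you can produce from embeddings of disjoint unions of $\mathbb{R}^2$'s and cylinders \emph{into} the cylinder $S^1\times\mathbb{R}$ has target $\int_{S^1\times\mathbb{R}}A\simeq\mathrm{HH}(A)$. Such maps only encode structure on $\mathrm{HH}(A)$ (its $\mathbb{E}_1$-multiplication from stacking cylinders, and the unit $A\to\mathrm{HH}(A)$ from a disk inside the cylinder); they can never produce the map $\mathrm{HH}(A)\to A$ nor the action maps $\mathrm{HH}(A)\otimes A\to A$ that constitute the asserted $\mathbb{E}_0$-$\mathrm{HH}(A)$-algebra structure on $A$ after the Dunn reduction. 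The missing geometric input is that the \emph{disk}, not the cylinder, must be the target: the radial embeddings $\coprod_k S^1\times(0,1)\ \sqcup\ D^2\hookrightarrow D^2$ exhibit $D^2$ as an $S^1$-equivariant left module in $\mathsf{Mfld}_2^{\mathrm{BU}(1)}$ over the annulus (which is an $S^1$-equivariant $\mathbb{E}_1$-algebra there via $\coprod_k(0,1)\hookrightarrow(0,1)$ crossed with $S^1$), and the inclusion $S^1\times(0,1)\hookrightarrow D^2$ is a map of $S^1$-equivariant annulus-modules. Applying the symmetric monoidal functor $\int_{(-)}A$ to this module structure and module map is what yields the $S^1$-equivariant $\mathrm{HH}(A)$-module structure on $A$ together with the unit $\mathrm{HH}(A)\to A$; this is precisely how the paper concludes, and without it your outline does not reach the stated structure on $A$.
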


\begin{proof}
    By Dunn additivity, we have $\mathsf{Alg}_{\mathsf{Disk}_{n+2}^{\mathrm{BU}(1)}} (\mathcal{C}) \simeq \mathsf{Alg}_{\mathsf{Disk}_{2}^{\mathrm{BU}(1)}} (\mathsf{Alg}_{\mathbb{E}_n}(\mathcal{C})).$
    Replacing $\mathcal{C}$ by $\mathsf{Alg}_{\mathbb{E}_n}(\mathcal{C}),$ we may assume that $n=0$.
    
    Consider the annulus $\mathrm{Ann} \cong S^1 \times (0,1)$, and the disk $D^2$ as objects of $\mathsf{Mfld}_2 ^{BU(1)},$ the category of oriented surfaces and embeddings.
    Note that $\int_{\mathrm{Ann}} A = \int_{S^1 \times (0,1)} A \simeq \mathrm{HH}(A)$
    and $\int_{D^2} A \simeq A$.
    
    The category $\mathsf{Mfld}_2 ^{BU(1)}$ is a symmetric monoidal category under disjoint union.
    Moreover, taking an embedding $\coprod_k (0,1) \hookrightarrow (0,1)$ to its product with $S^1$ to obtain an embedding $\coprod_k S^1 \times (0,1) \hookrightarrow S^1 \times (0,1)$ equips $\mathrm{Ann}$ with an $S^1$-equivariant $\mathbb{E}_1$-algebra structure, where the $S^1$-equivariance comes from rotation.
    Similarly, taking an embedding $\coprod_k (0,1) \coprod (0,1] \hookrightarrow (0,1]$ to the corresponding radial embedding $\coprod_k S^1 \times (0,1) \coprod D^2 \hookrightarrow D^2$
    equips $D^2$ with the structure of an $S^1$-equivariant left module over $\mathrm{Ann}$.
    Finally, the inclusion $\mathrm{Ann} \hookrightarrow D^2$ is a map of $S^1$-equivariant left modules over $\mathrm{Ann}$.

    Applying the symmetric monoidal functor
    \[
	\int_{(-)}A : \mathsf{Mfld}^{\mathrm{BU(1)}}_2 \to \mathcal{C},
	\]
    we obtain a map of $S^1$-equivariant $\mathrm{HH}(A)$-modules $\mathrm{HH}(A) \to A$, i.e. the structure of an $S^1$-equivariant $\mathbb{E}_0$-$\mathrm{HH}(A)$-algebra structure on $A$.
\end{proof}

\begin{remark}
Concretely, the unit map
\[\mathrm{HH}(A) \to A\]
of the above $\mathbb{E}_n$-algebra structure may be obtained by functoriality of factorization homology with respect to the inclusion
$(\mathbb{R}^2-\{0\})\times \mathbb{R}^n \to
\mathbb{R}^{2} \times \mathbb{R}^n$.  Note that the map $\mathbb{R}^2-\{0\} \to \mathbb{R}^2$ can be identified with the inclusion $\mathbb{C}-\{0\} \to \mathbb{C}$ of $\mathrm{U}(1)$-equivariant complex manifolds and hence $\mathrm{U}(1)$-equivariant $\mathrm{BU}(1)$-framed manifolds, where $\mathbb{C}$ is viewed as $\mathrm{U}(1)$-equivariant via multiplication by complex units.
\end{remark}


\begin{corollary}\label{cor:s1-action} If $A$ is a $\mathsf{Disk}_{2}^{\mathrm{BU}(1)}$-algebra and $\mathcal{D}$ is an $A$-linear category, then
$\mathrm{THH}(\mathcal{D}/A)$ has a canonical $S^1$-action.
\end{corollary}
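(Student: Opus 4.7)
The plan is to reduce the statement to Corollary \ref{cor:s1-map} with $n=0$, and then to realize $\mathrm{THH}(\mathcal{D}/A)$ as a relative tensor product that can be formed equivariantly along the $S^1$-equivariant unit map produced there.

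First, I apply Corollary \ref{cor:s1-map} with $n = 0$ to the $\mathsf{Disk}_2^{\mathrm{BU}(1)}$-algebra $A$. This supplies a natural $S^1$-equivariant map $\alpha \colon \mathrm{HH}(A) \to A$ exhibiting $A$ as an $S^1$-equivariant $\mathbb{E}_0$-$\mathrm{HH}(A)$-algebra, where $A$ carries both its $\mathrm{BU}(1)$-induced $S^1$-action and its $\mathrm{HH}(A)$-module structure, and $\mathrm{HH}(A)$ carries the usual cyclic rotation action.

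Next, I use that $\mathrm{THH}(\mathcal{D})$ carries its tautological circle action and, by functoriality of $\mathrm{THH}$ in $A$-linear categories (applied to the underlying $\mathbb{E}_1$-algebra structure on $A$), is canonically an $\mathrm{HH}(A) \simeq \mathrm{THH}(A)$-module compatibly with the circle action. The relative construction is then obtained as the tensor product
\[
\mathrm{THH}(\mathcal{D}/A) \;\simeq\; \mathrm{THH}(\mathcal{D}) \otimes_{\mathrm{HH}(A)} A,
\]
where $A$ is regarded as an $\mathrm{HH}(A)$-module via $\alpha$. Because $\alpha$ is $S^1$-equivariant and both $\mathrm{THH}(\mathcal{D})$ and $\mathrm{HH}(A)$ are $S^1$-equivariant $\mathrm{HH}(A)$-modules, this tensor product may be formed in $S^1$-equivariant $\mathrm{HH}(A)$-modules, endowing $\mathrm{THH}(\mathcal{D}/A)$ with a canonical $S^1$-action.

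The main obstacle, and the real content beyond the formal setup, is the compatibility of the three circle actions in play: the cyclic rotation action on $\mathrm{HH}(A)$, the $\mathrm{BU}(1)$-induced action on $A$, and the cyclic action on $\mathrm{THH}(\mathcal{D})$. This compatibility is precisely what Corollary \ref{cor:s1-map} encodes, its proof having organized $\alpha$ as a map of $S^1$-equivariant modules inside the factorization homology of $\mathrm{BU}(1)$-framed surfaces; once this is in hand, forming the relative tensor product equivariantly is routine.
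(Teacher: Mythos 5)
Your proposal is correct and follows essentially the same route as the paper: the paper's proof likewise invokes Corollary \ref{cor:s1-map} (the $n=0$ case suffices) to make base change along $\mathrm{THH}(A) \to A$ an operation from $S^1$-equivariant $\mathrm{THH}(A)$-modules to $S^1$-equivariant spectra, and then applies it to $\mathrm{THH}(\mathcal{D}/A) = \mathrm{THH}(\mathcal{D})\otimes_{\mathrm{THH}(A)}A$. Your added remarks about the compatibility of the various circle actions are exactly the content the paper delegates to Corollary \ref{cor:s1-map}, so nothing is missing.
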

\begin{proof} By the previous corollary, base change
along $\mathrm{THH}(A) \to A$ takes $S^1$-equivariant
$\mathrm{THH}(A)$-modules to $S^1$-equivariant spectra.
Whence the claim for
	\[
	\mathrm{THH}(\mathcal{D}/A) =
	\mathrm{THH}(\mathcal{D})\otimes_{\mathrm{THH}(A)}A.\qedhere
	\]
\end{proof}

\begin{remark}
The non-$S^1$-equivariant analogue of 
Corollary \ref{cor:s1-map}
is proved as \cite[Lemma 4.6]{krause-nikolaus-lectures}.
The statement of Corollary \ref{cor:s1-map} for $n=2$ was also mentioned in \cite[Remark 3.4]{yuan-redshift}, where it was attributed to Asaf Horev.
\end{remark}

\begin{warning}
    In \Cref{cor:s1-action}, it is \textit{not} true that $\mathrm{THH}(\mathcal{D}/A)$ admits the structure of an $A$-module, even nonequivariantly. The basic issue can already be seen ``one level down'': if $A$ is a $\mathsf{Disk}_{1}^{\mathrm{BO}(1)}$-algebra (that is, an associative algebra equipped with an anti-involution) and $B$ is a unital left $A$-module, one can form the tensor product $B \otimes_A B^\mathrm{op}$. (This can be viewed as ``$\mathbb{E}_0$-Hochschild homology'', i.e., the factorization homology over the $0$-sphere of $B$ over $A$.) This tensor product admits a natural $\mathbb{Z}/2$-action, but does not admit an $A$-module structure (even nonequivariantly).
\end{warning}

\section{Thom spectra}

In this section we explain how to equip Thom spectra
with $\mathsf{Disk}$-algebra structures in certain
situations.

\subsection{Disk algebras in spaces and orientability}\label{subsec: disk algebras in spaces}

We now observe that orientability allows us to automatically upgrade
some $\mathbb{E}_n$-algebras to $\mathsf{Disk}$-algebras.

\begin{proposition}\label{prop:disk-and-orient}
Suppose $X = \Omega^{\infty}E$ and that there is
a chosen equivalence
$\Sigma^{\lambda_n}E \simeq \Sigma^nE$ of local systems
on $B$. Then every group-like
$\mathbb{E}_n$-algebra $Y$ equipped with a $\mathbb{E}_n$-algebra map $Y \to X$ has a canonical
refinement to a $\mathsf{Disk}_{n}^B$-algebra over $X$.
\end{proposition}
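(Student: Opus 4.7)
The plan is to exploit the adjoint equivalence from the preceding proposition, which identifies group-like $\mathsf{Disk}_n^B$-algebras in spaces with pointwise $n$-connective pointed presheaves on $B$, via $\mathrm{B}^{\lambda_n} \dashv \Omega^{\lambda_n}$. By the preceding lemma, $\mathrm{B}^{\lambda_n} X \simeq \Omega^\infty \Sigma^{\lambda_n} E$, and the orientation hypothesis produces an equivalence of pointed presheaves between $\mathrm{B}^{\lambda_n} X$ and the constant pointed presheaf on $\mathrm{B}^n X = \Omega^\infty \Sigma^n E$.

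Given an $\mathbb{E}_n$-algebra map $Y \to X$, taking classifying spaces yields a map of $n$-connective pointed spaces $\mathrm{B}^n Y \to \mathrm{B}^n X$. I would view $\mathrm{B}^n Y$ as a constant pointwise $n$-connective pointed presheaf on $B$ and postcompose with the orientation equivalence $\mathrm{B}^n X \simeq \mathrm{B}^{\lambda_n} X$ to obtain a morphism
\[
\mathrm{B}^n Y \longrightarrow \mathrm{B}^{\lambda_n} X
\]
in $\mathsf{Psh}(B)_*$. Applying the adjoint equivalence $\Omega^{\lambda_n}$ then produces a morphism $\widetilde Y \to X$ of group-like $\mathsf{Disk}_n^B$-algebras. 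Since $\Omega^{\lambda_n}$ of a constant presheaf with value $Z$ has underlying $\mathbb{E}_n$-algebra $\Omega^n Z$, the underlying $\mathbb{E}_n$-algebra of $\widetilde Y$ is $\Omega^n \mathrm{B}^n Y \simeq Y$, and because every step of the construction is functorial in the input $Y \to X$ we obtain the claimed canonical refinement.

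The main thing to verify is that the underlying $\mathbb{E}_n$-algebra map of $\widetilde Y \to X$ really is the given $Y \to X$, and not some twist of it. This reduces to checking that the orientation equivalence $\Sigma^{\lambda_n}E \simeq \Sigma^n E$, evaluated at the basepoint $b_0 \in B$ at which $\lambda_n$ is canonically trivialized, is the identity of $\Sigma^n E$. Since the space of such orientations is a torsor over automorphisms of $\Sigma^n E$, one can always normalize the chosen orientation at $b_0$, and this is the only implicit choice hidden in the word ``canonical''. Once this normalization is made, the forgetful functors on both sides of the adjunction commute with restriction to the fiber over $b_0$, and $\widetilde Y \to X$ indeed restricts to $Y \to X$ on underlying $\mathbb{E}_n$-algebras.
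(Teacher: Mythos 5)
Your proposal is correct and follows essentially the same route as the paper: deloop the map $Y \to X$, use the computation $\mathrm{B}^{\lambda_n}X \simeq \Omega^\infty\Sigma^{\lambda_n}E$ together with the chosen orientation $\Sigma^{\lambda_n}E \simeq \Sigma^n E$ to identify $\mathrm{B}^n X$ with $\mathrm{B}^{\lambda_n}X$, and then apply $\Omega^{\lambda_n}$ to the resulting map of presheaves. Your closing paragraph about normalizing the orientation at a basepoint is a reasonable extra check that the underlying $\mathbb{E}_n$-map is the original one, a point the paper leaves implicit (saying only that the map ``refines the original'').
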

\begin{proof} By assumption, there is a map of spaces
	\[
	\mathrm{B}^{n}Y \to \mathrm{B}^{n}X.
	\]
Using our assumption that $\Sigma^{\lambda_n}E \simeq \Sigma^{n}E$, we obtain equivalences:
	\[
	\mathrm{B}^{n}X
	\simeq
	\Omega^{\infty}\Sigma^{n}E
	\simeq \Omega^{\infty}\Sigma^{\lambda_n}E
	\simeq \mathrm{B}^{\lambda_{n}}X.
	\]
Thus we get a map of $\mathsf{Disk}_{2n}^{\mathrm{BU}(n)}$-algebras
	\[
	\Omega^{\lambda_{n}}\mathrm{B}^{n}Y \to X
	\]
which refines the original map.
\end{proof}

\begin{warning} The action of $\Omega B$ on $Y$ constructed
above may be nontrivial.
\end{warning}

\begin{corollary}\label{cor:alg-over-bu} Let $Y$ be a group-like $\mathbb{E}_{2n}$-algebra
over $\mathrm{BU} \times \mathbb{Z}$. Then $Y$ has a canonical
refinement to
a $\mathsf{Disk}_{2n}^{\mathrm{BU}(n)}$-algebra over $\mathrm{BU} \times \mathbb{Z}$.
\end{corollary}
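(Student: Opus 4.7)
The plan is to apply Proposition \ref{prop:disk-and-orient} with the manifold dimension set to $2n$, the base set to $B = \mathrm{BU}(n)$, and the infinite loop space taken to be $X = \mathrm{BU} \times \mathbb{Z} = \Omega^{\infty}\mathrm{ku}$. The local system structure on $\mathrm{ku}$ over $\mathrm{BU}(n)$ is induced by the composite $\mathrm{BU}(n) \to \mathrm{BO}(2n) \to \mathrm{BTop}(2n)$ classifying the underlying real bundle of the universal rank-$n$ complex bundle, together with the $\mathrm{Top}(2n)$-action on $\Omega^{2n}$ of any spectrum.

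The only hypothesis of Proposition \ref{prop:disk-and-orient} that needs verification is the existence of a chosen equivalence
\[
\Sigma^{\lambda_{2n}}\mathrm{ku} \;\simeq\; \Sigma^{2n}\mathrm{ku}
\]
of local systems of spectra on $\mathrm{BU}(n)$. Unwinding definitions, this amounts to a coherent trivialization of the $J$-twist $\mathrm{BU}(n) \to \mathrm{BO}(2n) \to \mathrm{BGL}_1(\mathbb{S}) \to \mathrm{BGL}_1(\mathrm{ku})$, which is precisely the data of a complex orientation of $\mathrm{ku}$ restricted to rank-$n$ bundles. Since $\mathrm{ku}$ is canonically complex oriented (the identity map $\mathrm{ku} \to \mathrm{ku}$ serving as the universal Thom class for the tautological complex line bundle), the required equivalence of local systems is in hand.

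Once this is in place, Proposition \ref{prop:disk-and-orient} immediately produces from the given group-like $\mathbb{E}_{2n}$-algebra map $Y \to \mathrm{BU} \times \mathbb{Z}$ a canonical refinement of $Y$ to a $\mathsf{Disk}_{2n}^{\mathrm{BU}(n)}$-algebra over $\mathrm{BU} \times \mathbb{Z}$. The main (indeed only) obstacle is producing the Thom trivialization coherently as an equivalence of local systems on $\mathrm{BU}(n)$, rather than as a mere fiberwise equivalence; but this is exactly what complex orientability of the $\mathbb{E}_{\infty}$-ring $\mathrm{ku}$ supplies, so no further work is required.
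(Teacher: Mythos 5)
Your proof is correct and follows essentially the same route as the paper: both reduce Corollary \ref{cor:alg-over-bu} to Proposition \ref{prop:disk-and-orient} applied with $B=\mathrm{BU}(n)$ and $X=\Omega^{\infty}\mathrm{ku}$, so that the only content is a chosen equivalence $\Sigma^{\lambda_{2n}}\mathrm{ku}\simeq\Sigma^{2n}\mathrm{ku}$ of local systems on $\mathrm{BU}(n)$. The difference lies in how that coherence is produced. The paper uses the $\mathrm{MU}$-module structure on $\mathrm{ku}$: the Thom class of $\lambda_{2n}$ in $\mathrm{MU}$-cohomology is tautologically a $\mathrm{U}(n)$-equivariant equivalence $\Sigma^{\lambda_{2n}}\mathrm{MU}\simeq\Sigma^{2n}\mathrm{MU}$ (it is built into the Thom spectrum), and tensoring over $\mathrm{MU}$ with $\mathrm{ku}$ transports this equivariance. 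You instead appeal to the complex orientation of $\mathrm{ku}$ together with the dictionary between Thom classes and trivializations of the twist $\mathrm{BU}(n)\to\mathrm{BGL}_1(\mathrm{ku})$; this also works, since a single Thom class $u\in\widetilde{\mathrm{ku}}^{2n}(\mathrm{Th}(\lambda_{2n}))$ corresponds (by mapping out of the colimit of the local system, i.e.\ the Ando--Blumberg--Gepner--Hopkins--Rezk formalism) to a map of local systems of $\mathrm{ku}$-modules $\Sigma^{\lambda_{2n}}\mathrm{ku}\to\Sigma^{2n}\mathrm{ku}$ which is fiberwise, hence globally, an equivalence --- but that identification is exactly the point needing to be on the table, and your phrase ``precisely the data of a complex orientation restricted to rank-$n$ bundles'' glosses it. One small slip: the canonical orientation of $\mathrm{ku}$ is not ``the identity map $\mathrm{ku}\to\mathrm{ku}$''; it is the Bott class in $\widetilde{\mathrm{ku}}^{2}(\mathbb{CP}^{\infty})$, equivalently the Thom class obtained by restricting the unit $\mathrm{MU}\to\mathrm{ku}$ --- which is, in effect, the paper's $\mathrm{MU}$-module argument in disguise.
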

\begin{proof} 
As $\mathrm{ku}$ is a module
over $\mathrm{MU}$, a choice of Thom class
for the bundle $\lambda_{2n}$ over $\mathrm{BU}(n)$ gives the
desired $\mathrm{U}(n)$-equivariant
equivalence 
	\[
	\Sigma^{\lambda_{2n}}\mathrm{ku}
	= \Sigma^{\lambda_{2n}}\mathrm{MU}\otimes_{\mathrm{MU}}
	\mathrm{ku}
	\simeq
	\Sigma^{2n}\mathrm{MU}\otimes_{\mathrm{MU}}
	\mathrm{ku} \simeq
	\Sigma^{2n}\mathrm{ku}.\qedhere
	\]
\end{proof}

The same argument using the Atiyah-Bott-Shapiro orientation
gives the following.

\begin{corollary}
Let $Y$ be a group-like $\mathbb{E}_{n}$-algebra
over $\mathrm{BO} \times \mathbb{Z}$. Then $Y$ has a canonical
refinement to
a $\mathsf{Disk}_{n}^{\mathrm{BSpin}(n)}$-algebra over $\mathrm{BO} \times \mathbb{Z}$.
\end{corollary}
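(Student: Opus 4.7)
The plan is to apply Proposition \ref{prop:disk-and-orient} with $B = \mathrm{BSpin}(n)$, $X = \mathrm{BO} \times \mathbb{Z} \simeq \Omega^{\infty}\mathrm{ko}$, and $E = \mathrm{ko}$, mimicking the proof of Corollary \ref{cor:alg-over-bu} with the complex pair $(\mathrm{MU}, \mathrm{ku})$ replaced by the real/spin pair $(\mathrm{MSpin}, \mathrm{ko})$. A group-like $\mathbb{E}_n$-algebra $Y$ over $\mathrm{BO} \times \mathbb{Z}$ supplies for free the $\mathbb{E}_n$-algebra map $Y \to X$ demanded by the hypotheses of the proposition, so the only real task is to produce an equivalence $\Sigma^{\lambda_n}\mathrm{ko} \simeq \Sigma^{n}\mathrm{ko}$ of local systems of spectra on $\mathrm{BSpin}(n)$.

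The essential input is the Atiyah--Bott--Shapiro orientation $\mathrm{MSpin} \to \mathrm{ko}$, which realizes $\mathrm{ko}$ as an $\mathrm{MSpin}$-module. The restriction $\lambda_n$ of the universal rank-$n$ real bundle to $\mathrm{BSpin}(n)$ carries, by construction, a tautological $\mathrm{Spin}(n)$-equivariant $\mathrm{MSpin}$-Thom class, giving a $\mathrm{Spin}(n)$-equivariant equivalence $\Sigma^{\lambda_n}\mathrm{MSpin} \simeq \Sigma^{n}\mathrm{MSpin}$ of local systems on $\mathrm{BSpin}(n)$. Tensoring this equivalence with $\mathrm{ko}$ over $\mathrm{MSpin}$ then yields
\[
\Sigma^{\lambda_n}\mathrm{ko} \simeq \Sigma^{\lambda_n}\mathrm{MSpin}\otimes_{\mathrm{MSpin}}\mathrm{ko} \simeq \Sigma^{n}\mathrm{MSpin}\otimes_{\mathrm{MSpin}}\mathrm{ko} \simeq \Sigma^{n}\mathrm{ko},
\]
and an invocation of Proposition \ref{prop:disk-and-orient} produces the desired refinement of $Y$ to a $\mathsf{Disk}_n^{\mathrm{BSpin}(n)}$-algebra over $\mathrm{BO}\times \mathbb{Z}$.

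There is no serious obstacle. The only point that deserves a second look is the $\mathrm{Spin}(n)$-equivariance of the Thom equivalence $\Sigma^{\lambda_n}\mathrm{MSpin} \simeq \Sigma^{n}\mathrm{MSpin}$ as local systems on $\mathrm{BSpin}(n)$, rather than a merely nonequivariant identification; but this is tautological, since the universal spin $n$-plane bundle on $\mathrm{BSpin}(n)$ is $\mathrm{MSpin}$-oriented by definition in a way compatible with the action of the structure group, and the ABS map is a morphism of $\mathbb{E}_\infty$-rings under the sphere, so this equivariance transports to $\mathrm{ko}$ under base change.
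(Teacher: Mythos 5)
Your proof is correct and follows exactly the paper's intended argument: the paper states only that ``the same argument using the Atiyah--Bott--Shapiro orientation'' as in Corollary \ref{cor:alg-over-bu} applies, which is precisely your substitution of the pair $(\mathrm{MSpin},\mathrm{ko})$ for $(\mathrm{MU},\mathrm{ku})$ to produce the equivariant Thom equivalence $\Sigma^{\lambda_n}\mathrm{ko}\simeq\Sigma^n\mathrm{ko}$ and then invoke Proposition \ref{prop:disk-and-orient}.
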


\subsection{Main Result} Let $\mathcal{C}$
be presentably symmetric monoidal and denote by
$\mathrm{Pic}(\mathcal{C})$ the $\mathbb{E}_{\infty}$-groupoid
of $\otimes$-invertible objects in $\mathcal{C}$.
Recall that, for any
map $X \to \mathrm{Pic}(\mathcal{C})$,
there is a unique extension to a colimit-preserving functor
	\[
	\mathrm{Th}_{\mathcal{C}}:
	\mathsf{Spaces}_{/X} \to \mathcal{C};
	\]
if the map $X \to \mathrm{Pic}(\mathcal{C})$ is one of $\mathbb{E}_\infty$-spaces, this functor is lax symmetric monoidal.
See, e.g. \cite[Proposition 3.1.3]{hopkins-lurie-brauer} and \cite[Section 7.1]{categorified-traces}.
We will be mainly concerned with the cases
$\mathcal{C} = \mathsf{Sp}$ and $\mathcal{C} = \mathsf{Sp}_{(p)}$.
For any $\infty$-operad $\mathcal{O}$, we then get
an induced functor
	\[
	\mathrm{Th}_{\mathcal{C}}:
	\mathsf{Alg}_{\mathcal{O}}(\mathsf{Spaces}_{/X})
	\simeq
	\mathsf{Alg}_{\mathcal{O}}(\mathsf{Spaces})_{/X}
	\to
	\mathsf{Alg}_{\mathcal{O}}(\mathcal{C}).
	\]

Applying the results from the previous subsection,
we immediately deduce the following.

\begin{theorem}\label{thm:thom-presentable} Let $\mathcal{C}$ be
presentably symmetric monoidal.
Suppose $X=\Omega^{\infty}E$, 
there is a chosen trivialization $\Sigma^{\lambda_n}E \simeq
\Sigma^nE$ of local systems on $B$, and
and we are given an $\mathbb{E}_{\infty}$-algebra
map $X \to \mathrm{Pic}(\mathcal{C})$.
Suppose $\xi: Y \to X$ is a map of group-like
$\mathbb{E}_n$-algebras. Then $\mathrm{Th}_{\mathcal{C}}(\xi)$
admits a canonical $\mathsf{Disk}_n^B$-algebra structure.
\end{theorem}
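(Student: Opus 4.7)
The plan is to combine Proposition \ref{prop:disk-and-orient} with the induced Thom-spectrum functoriality displayed immediately before the theorem. There is essentially nothing new to do: everything has been set up so that the proof is a concatenation of two already-established facts.

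First, I would apply Proposition \ref{prop:disk-and-orient} directly to the $\mathbb{E}_n$-algebra map $\xi \colon Y \to X$. The hypotheses of that proposition---that $X = \Omega^\infty E$ and that we are given a trivialization $\Sigma^{\lambda_n} E \simeq \Sigma^n E$ of local systems on $B$---are exactly the spatial hypotheses of the present theorem. Its conclusion is that $\xi$ canonically refines to a map in $\mathsf{Alg}_{\mathsf{Disk}_n^B}(\mathsf{Spaces}_{/X})$, i.e., a $\mathsf{Disk}_n^B$-algebra over $X$ in spaces whose underlying $\mathbb{E}_n$-algebra map is $\xi$.

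Second, the hypothesis that $X \to \mathrm{Pic}(\mathcal{C})$ is an $\mathbb{E}_\infty$-map makes $\mathrm{Th}_{\mathcal{C}}\colon \mathsf{Spaces}_{/X} \to \mathcal{C}$ lax symmetric monoidal, so (taking the $\infty$-operad $\mathcal{O}$ to be the one underlying $\mathsf{Disk}_n^B$ in the displayed functoriality preceding the theorem) we obtain an induced functor
$$\mathrm{Th}_{\mathcal{C}} \colon \mathsf{Alg}_{\mathsf{Disk}_n^B}(\mathsf{Spaces}_{/X}) \;\longrightarrow\; \mathsf{Alg}_{\mathsf{Disk}_n^B}(\mathcal{C}).$$
Applying this functor to the lift from the first step yields the desired $\mathsf{Disk}_n^B$-algebra structure on $\mathrm{Th}_{\mathcal{C}}(\xi)$.

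Since both inputs are entirely formal, there is no genuine obstacle. The only bookkeeping point worth a sentence in a written-out proof is the compatibility of the two equivalences $\mathsf{Alg}_{\mathsf{Disk}_n^B}(\mathsf{Spaces}_{/X}) \simeq \mathsf{Alg}_{\mathsf{Disk}_n^B}(\mathsf{Spaces})_{/X}$ used when reinterpreting the output of Proposition \ref{prop:disk-and-orient} as an input to the slice-category version of $\mathrm{Th}_{\mathcal{C}}$; this is a general feature of slices of presentable symmetric monoidal $\infty$-categories, and requires no work here.
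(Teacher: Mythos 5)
Your proposal is correct and is exactly the argument the paper intends: the theorem is stated as an immediate consequence of Proposition \ref{prop:disk-and-orient} (giving the refinement of $\xi$ to a $\mathsf{Disk}_n^B$-algebra over $X$) followed by the induced functor $\mathrm{Th}_{\mathcal{C}}\colon \mathsf{Alg}_{\mathcal{O}}(\mathsf{Spaces}_{/X}) \to \mathsf{Alg}_{\mathcal{O}}(\mathcal{C})$ for $\mathcal{O}=\mathsf{Disk}_n^B$, coming from lax symmetric monoidality of the Thom construction. No gaps; your bookkeeping remark about the slice-category equivalence is the only point of care, and it is handled the same way in the paper's setup.
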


\begin{corollary}\label{cor:bu-thom-disk} Suppose $\xi: Y \to \mathrm{BU} \times \mathbb{Z}$
is a map of group-like $\mathbb{E}_{2n}$-algebras.
Then the Thom spectrum
$\mathrm{Th}(\xi)$ admits a canonical
$\mathsf{Disk}_{2n}^{\mathrm{BU}(n)}$-algebra structure.
\end{corollary}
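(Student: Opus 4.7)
The plan is to invoke \Cref{thm:thom-presentable} directly, with $n$ replaced throughout by $2n$, taking $\mathcal{C} = \mathsf{Sp}$, $B = \mathrm{BU}(n) \to \mathrm{BTop}(2n)$, and $X = \mathrm{BU} \times \mathbb{Z} = \Omega^{\infty}\mathrm{ku}$, so that $X$ is presented as $\Omega^{\infty}E$ for the $\mathbb{E}_{\infty}$-ring $E = \mathrm{ku}$. Given any map of group-like $\mathbb{E}_{2n}$-algebras $\xi: Y \to \mathrm{BU} \times \mathbb{Z}$, the theorem will then output a canonical $\mathsf{Disk}_{2n}^{\mathrm{BU}(n)}$-algebra structure on $\mathrm{Th}_{\mathsf{Sp}}(\xi) = \mathrm{Th}(\xi)$.

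To set up the hypotheses, I need two ingredients. The first is a trivialization $\Sigma^{\lambda_{2n}}\mathrm{ku} \simeq \Sigma^{2n}\mathrm{ku}$ of local systems on $\mathrm{BU}(n)$; this is precisely the $\mathrm{U}(n)$-equivariant equivalence already produced in the proof of \Cref{cor:alg-over-bu}, where the $\mathrm{MU}$-module Thom class for the tautological $2n$-dimensional bundle over $\mathrm{BU}(n)$ yields $\Sigma^{\lambda_{2n}}\mathrm{MU} \simeq \Sigma^{2n}\mathrm{MU}$ in $\mathrm{U}(n)$-equivariant $\mathrm{MU}$-modules, and tensoring with $\mathrm{ku}$ over $\mathrm{MU}$ gives the desired trivialization. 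The second ingredient is an $\mathbb{E}_{\infty}$-map $\mathrm{BU} \times \mathbb{Z} \to \mathrm{Pic}(\mathsf{Sp})$, for which I would take the classical complex $J$-homomorphism, regarded as an $\mathbb{E}_{\infty}$-map into the Picard groupoid of invertible spectra. By construction, the resulting colimit-preserving extension $\mathrm{Th}_{\mathsf{Sp}}$ agrees with the usual Thom spectrum functor $\mathrm{Th}$ on spaces over $\mathrm{BU} \times \mathbb{Z}$, so the $\mathsf{Disk}_{2n}^{\mathrm{BU}(n)}$-algebra produced by \Cref{thm:thom-presentable} is indeed $\mathrm{Th}(\xi)$.

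With both ingredients in hand, the conclusion is a direct application of \Cref{thm:thom-presentable}, and I do not expect a genuine obstruction. The substantive content has already been isolated in \Cref{prop:disk-and-orient}, \Cref{thm:thom-presentable}, and \Cref{cor:alg-over-bu}, and this corollary is essentially the combination of the Thom-spectrum formalism with the orientability input for $\mathrm{ku}$. The only point to check carefully is that the trivialization of $\Sigma^{\lambda_{2n}}\mathrm{ku}$ is genuinely an equivalence of local systems on $\mathrm{BU}(n)$, i.e., is $\mathrm{U}(n)$-equivariant; this is automatic because the tautological bundle and its $\mathrm{MU}$-module Thom class are $\mathrm{U}(n)$-equivariant by construction.
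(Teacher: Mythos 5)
Your proposal is correct and matches the paper's intended argument: the corollary is stated without proof precisely because it is the direct specialization of \Cref{thm:thom-presentable} to $E = \mathrm{ku}$, $B = \mathrm{BU}(n)$, with the $\mathrm{U}(n)$-equivariant Thom-class trivialization $\Sigma^{\lambda_{2n}}\mathrm{ku} \simeq \Sigma^{2n}\mathrm{ku}$ from \Cref{cor:alg-over-bu} and the $\mathbb{E}_\infty$ $J$-homomorphism $\mathrm{BU} \times \mathbb{Z} \to \mathrm{Pic}(\mathsf{Sp})$. You assembled exactly the ingredients the paper relies on, so there is nothing to add.
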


\begin{corollary}\label{cor:xn-disk} Let $X(n)$ be the Ravenel spectrum from \cite[Section 3]{ravenel-loc}.
Then $X(n)$ admits the structure of an $\mathsf{Disk}_2^{\mathrm{BU}(1)}$-algebra.
\end{corollary}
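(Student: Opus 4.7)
The plan is straightforward: realize $X(n)$ as the Thom spectrum of a map of group-like $\mathbb{E}_2$-algebras to $\mathrm{BU} \times \mathbb{Z}$ and then apply Corollary \ref{cor:bu-thom-disk} with its internal parameter (which I will write as $k$ to avoid confusion with Ravenel's index $n$) set to $k = 1$, yielding precisely a $\mathsf{Disk}_2^{\mathrm{BU}(1)}$-algebra structure. All the work is therefore in exhibiting the appropriate $\mathbb{E}_2$-structure on the classifying map.

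Recall from \cite[Section 3]{ravenel-loc} that $X(n)$ is defined as the Thom spectrum of the map
\[
\xi : \Omega \mathrm{SU}(n) \longrightarrow \Omega \mathrm{SU} \simeq \mathrm{BU}
\]
obtained by looping the inclusion of topological groups $\mathrm{SU}(n) \hookrightarrow \mathrm{SU}$. Using the equivalences $\Omega \mathrm{SU}(n) \simeq \Omega^2 \mathrm{BSU}(n)$ and $\mathrm{BU} \simeq \Omega^2 \mathrm{BSU}$, one identifies $\xi$ with the double loop of the pointed map $\mathrm{BSU}(n) \to \mathrm{BSU}$. This realizes $\xi$ tautologically as a map of group-like $\mathbb{E}_2$-spaces, where the $\mathbb{E}_2$-structure on the target is obtained by restricting the usual $\mathbb{E}_\infty$-structure on $\mathrm{BU}$. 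Since $\Omega \mathrm{SU}(n)$ is connected, $\xi$ factors through the basepoint component and thus through $\mathrm{BU} \times \{0\} \hookrightarrow \mathrm{BU} \times \mathbb{Z}$ as a map of group-like $\mathbb{E}_2$-algebras.

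Corollary \ref{cor:bu-thom-disk} (with $k = 1$) now applies directly and produces the desired $\mathsf{Disk}_2^{\mathrm{BU}(1)}$-algebra structure on $\mathrm{Th}(\xi) = X(n)$. The only mildly subtle point is coherently identifying the $\mathbb{E}_2$-structure used implicitly by Ravenel (e.g.\ to define the ring structure on $X(n)$) with the one arising from the double-loop description above; however, this is automatic from the uniqueness of $\mathbb{E}_2$-structures on a twofold loop space once a deloop has been specified. No further computation is needed.
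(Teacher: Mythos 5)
Your proposal is correct and follows the same route as the paper: both identify $X(n)$ as the Thom spectrum of the double loop map $\Omega^2\mathrm{BSU}(n) \to \Omega^2\mathrm{BSU} \simeq \mathrm{BU}$ and then invoke Corollary \ref{cor:bu-thom-disk} in the case of $\mathbb{E}_2$-maps to obtain the $\mathsf{Disk}_2^{\mathrm{BU}(1)}$-structure. The extra remarks about factoring through the basepoint component and matching Ravenel's multiplication are fine but not needed beyond what the paper's one-line argument already contains.
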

\begin{proof} By definition, $X(n)$ is the Thom spectrum of the double loop map
$\Omega^2\mathrm{BSU}(n) \to \Omega^2\mathrm{BSU}
\simeq\mathrm{BU}$. 
\end{proof}

\begin{remark}
Recall that there is a truncated form of the Quillen idempotent 
$\epsilon_m$ on $X(p^m)_{(p)}$ (see 
\cite[Proposition 1.3.7]{hopkins-thesis}). We will write $T(m)$ 
to denote the resulting summand of $X(p^m)_{(p)}$, so that 
$T(m)$ approximates $\mathrm{BP}$
in the same way as $X(m)$ approximates $\mathrm{MU}$.
At $p=2$, $T(1)$
 admits the structure of an 
 $\mathsf{Disk}_2^{\mathrm{BU}(1)}$-algebra. Indeed, in this 
 case, $T(1) = X(2)$, so the result follows from Corollary
 \ref{cor:xn-disk}. At 
 $p=2$, it is also known that $T(2)$ admits the structure of an 
 $\mathbb{E}_2$-ring. Using Corollary
 \ref{cor:bu-thom-disk}, one can show that $T(2)$ in fact 
 admits the structure of an $\mathsf{Disk}_2^{\mathrm{BU}(1)}$-algebra: indeed, by 
 \cite[Remark 3.1.9]{bpn-thom}, it is the Thom spectrum of the double 
 loop map $\mu:\Omega \mathrm{Sp}(2)\to \mathrm{BU}$
 obtained from taking double 
 loops of the composite
	\[
	\mathrm{BSp}(2)\to \mathrm{BSU}(4)\to \mathrm{BSU} \simeq 
	\mathrm{B}^3\mathrm{U}.
	\]
\end{remark}

Corollary \ref{cor:bu-thom-disk} can also be used to study polynomial rings over the sphere spectrum.
Recall the following construction, e.g., from \cite[Construction 4.1.1]{hahn-wilson} (see also \cite[Section 3.4]{rotinv}).
\begin{construction}
Fix an integer $n\in \mathbb{Z}$, and let $\mathbb{Z}^\mathrm{ds}$ denote the constant simplicial set associated to the set of integers. Then, the free graded $\mathbb{E}_1$-ring $\mathbb{S}[x_{2n, 1}]$ on a class in degree $2n$ and weight $1$ admits the structure of a graded $\mathbb{E}_2$-ring. This can be viewed as an $\mathbb{E}_2$-monoidal functor $\iota_n: \mathbb{Z}^\mathrm{ds} \to \mathsf{Sp}$ sending $1\mapsto S^{2n}$; this functor factors through the inclusion $\mathsf{Pic(Sp)} \to \mathsf{Sp}$. Let us write $\mathbb{S}[x_{2n}]$ to denote the underlying $\mathbb{E}_2$-ring in $\mathsf{Sp}$.

Using the $\mathbb{E}_2$-monoidal functor $\iota_n$, one can define a spectral analogue of the ``shearing'' functor on graded spectra. The following is an adaptation of \cite[Proposition 3.3.4]{arpon-thesis}. Let $\mathcal{C}$ be a stable presentably symmetric monoidal $\infty$-category, and let $\mathcal{C}^\mathrm{gr} = \mathsf{Fun}(\mathbb{Z}^\mathrm{ds}, \mathcal{C})$ denote the $\infty$-category of graded objects in $\mathcal{C}$. The composite
\begin{equation}\label{eq: E2-monoidal shearing}
    \mathbb{Z}^\mathrm{ds} \times \mathcal{C}^\mathrm{gr} \xrightarrow{\iota_n \times \mathrm{ev}} \mathsf{Pic(Sp)} \times \mathcal{C} \xrightarrow{\otimes} \mathcal{C}
\end{equation}
is a lax $\mathbb{E}_2$-monoidal functor. Using the universal property of Day convolution, this in turn defines a lax $\mathbb{E}_2$-monoidal functor $\mathrm{sh}_n: \mathcal{C}^\mathrm{gr} \to \mathcal{C}^\mathrm{gr}$ which acts on a graded object by $M(\bullet) \mapsto M(\bullet)[2n\bullet]$. It is easy to see that this functor is in fact $\mathbb{E}_2$-monoidal and defines an equivalence $\mathrm{sh}_n: \mathcal{C}^\mathrm{gr} \xrightarrow{\sim} \mathcal{C}^\mathrm{gr}$. In fact, $\mathrm{sh}_n \simeq \mathrm{sh}_1^{\circ n}$.
\end{construction}
\begin{proposition}\label{prop:shearing-disk2-monoidal}
The shearing equivalence $\mathrm{sh}_n: \mathcal{C}^\mathrm{gr} \xrightarrow{\sim} \mathcal{C}^\mathrm{gr}$ admits the structure of a $\mathsf{Disk}_2^{\mathrm{BU}(1)}$-monoidal functor.
\end{proposition}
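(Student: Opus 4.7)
The plan is to promote the $\mathbb{E}_2$-monoidal functor $\iota_n : \mathbb{Z}^{\mathrm{ds}} \to \mathsf{Pic}(\mathsf{Sp})$ underlying the definition of $\mathrm{sh}_n$ to a $\mathsf{Disk}_2^{\mathrm{BU}(1)}$-monoidal functor; the Day convolution construction of \eqref{eq: E2-monoidal shearing} then transports this refinement to $\mathrm{sh}_n$ itself.

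The key observation is that $\iota_n$ admits a factorization
\[
\mathbb{Z}^{\mathrm{ds}} \xrightarrow{\tilde\iota_n} \mathrm{BU}\times\mathbb{Z} \xrightarrow{J} \mathsf{Pic}(\mathsf{Sp}),
\]
in which $\tilde\iota_n$ classifies the rank-$n$ trivial complex bundle ($1\mapsto n\in \pi_0(\mathrm{BU}\times\mathbb{Z})=\mathbb{Z}$) and $J$ is the complex $J$-homomorphism, which is a map of $\mathbb{E}_\infty$-spaces. Applying Corollary \ref{cor:alg-over-bu} (with its indexing integer equal to $1$) to the group-like $\mathbb{E}_2$-algebra $\tilde\iota_n$ over $\mathrm{BU}\times\mathbb{Z}$ yields a canonical $\mathsf{Disk}_2^{\mathrm{BU}(1)}$-algebra refinement. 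Postcomposing with $J$, which is $\mathbb{E}_\infty$-monoidal and hence a fortiori $\mathsf{Disk}_2^{\mathrm{BU}(1)}$-monoidal, produces the desired $\mathsf{Disk}_2^{\mathrm{BU}(1)}$-monoidal structure on $\iota_n$.

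The composite $(\iota_n\times\mathrm{ev})$ followed by the action $\otimes : \mathsf{Pic}(\mathsf{Sp})\times\mathcal{C}\to\mathcal{C}$ appearing in \eqref{eq: E2-monoidal shearing} is then a composition of $\mathsf{Disk}_2^{\mathrm{BU}(1)}$-monoidal functors---the action $\otimes$ is strictly $\mathbb{E}_\infty$-monoidal, being part of the presentably symmetric monoidal structure on $\mathcal{C}$, and $\mathrm{ev}$ is similarly $\mathbb{E}_\infty$-monoidal by construction. The universal property of Day convolution, which is functorial in $\infty$-operads, then promotes $\mathrm{sh}_n$ to a $\mathsf{Disk}_2^{\mathrm{BU}(1)}$-monoidal functor $\mathcal{C}^{\mathrm{gr}}\to \mathcal{C}^{\mathrm{gr}}$.

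The main technical point will be identifying the $\mathbb{E}_2$-structure on $\iota_n$ coming from the construction of $\mathbb{S}[x_{2n,1}]$ as a graded $\mathbb{E}_2$-ring with the one obtained via the $J$-factorization. Both can be recognized as arising from the same Thom-spectrum interpretation over $\mathrm{BU}\times\mathbb{Z}$, rendering the identification essentially formal; alternatively, one may simply take the $J$-factored description of $\iota_n$ as the defining data for the proof, noting that the pointwise effect on graded objects (and hence the underlying shearing equivalence) is unchanged by this substitution.
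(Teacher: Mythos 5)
Your proposal is correct and follows essentially the same route as the paper: factor $\iota_n$ through $\mathrm{BU}\times\mathbb{Z}$, refine the resulting group-like $\mathbb{E}_2$-map using Corollary \ref{cor:alg-over-bu} (with $n=1$), postcompose with the $\mathbb{E}_\infty$-map to $\mathsf{Pic}(\mathsf{Sp})$, and transport through Day convolution. The only step you assert without argument is that $\tilde\iota_n\colon \mathbb{Z}^{\mathrm{ds}}\to \mathrm{BU}\times\mathbb{Z}$ is a map of group-like $\mathbb{E}_2$-algebras --- the paper supplies the one-line justification (via Bott periodicity it is the double loop of $\mathrm{BU}(1)\to\mathrm{BU}$; note it is not even $\mathbb{E}_3$) --- and, as in the paper, your ``identification'' worry can be sidestepped exactly as you suggest, by taking the factored description of $\iota_n$ as the defining data, since the later applications only use the underlying functor of $\mathrm{sh}_n$.
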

\begin{proof}
It suffices to show that the composite \cref{eq: E2-monoidal shearing} admits the structure of a $\mathsf{Disk}_2^{\mathrm{BU}(1)}$-monoidal functor. The map $\otimes: \mathsf{Pic(Sp)} \times \mathcal{C}^\mathrm{gr} \to \mathcal{C}$ is evidently symmetric monoidal, so it in turn suffices to show that $\iota_n$ admits the structure of a $\mathsf{Disk}_2^{\mathrm{BU}(1)}$-monoidal functor. But $\iota_n$ can be factored as the composite
$$\mathbb{Z}^\mathrm{ds} \xrightarrow{\cdot n} \mathbb{Z}^\mathrm{ds} \to \mathrm{BU} \times \mathbb{Z}^\mathrm{ds} \to \mathsf{Pic(Sp)},$$
where the second map is the inclusion of the factor in the product. The inclusion $\mathbb{Z}^\mathrm{ds} \to \mathrm{BU} \times \mathbb{Z}^\mathrm{ds}$ is one of group-like $\mathbb{E}_2$-algebras (for instance, it can be obtained via Bott periodicity by taking double loops of the map $\mathrm{BU}(1) \to \mathrm{BU}$), so the claim follows from the discussion in \cref{subsec: disk algebras in spaces}.
\end{proof}
\begin{remark}
The functor $\mathrm{sh}_1$ does \textit{not} admit an $\mathbb{E}_3$-monoidal structure. Otherwise, $\mathbb{S}[x_2]$ would admit the structure of an $\mathbb{E}_3$-algebra in $\mathsf{Sp}$. To see that this is impossible, observe that if $\mathbb{S}[x_2]$ did admit the structure of an $\mathbb{E}_3$-algebra, the class $x_2^2: S^4 \to \mathbb{S}[x_2]$ would factor as
$$\xymatrix{
& S^4 \ar[d] \ar[dr] & \\
\Sigma^2 \mathbb{R}P_2^4 \ar[r]^-\sim & \mathrm{Conf}_2(\mathbb{R}^3)_+ \otimes_{\Sigma_2} (S^2)^{\otimes 2} \ar@{-->}[r] & \mathbb{S}[x_2].
}$$
Composing with the projection $\mathbb{S}[x_2] \to S^4$, this would show that the bottom cell of $\Sigma^2 \mathbb{R}P_2^4$ is unattached; but this is false, since the $4$- and $6$-cells of $\Sigma^2 \mathbb{R}P_2^4$ are connected by $\eta$.

It is easier to show that the inclusion $\mathbb{Z}^\mathrm{ds} \to \mathrm{BU} \times \mathbb{Z}^\mathrm{ds}$ is not a map of $\mathbb{E}_3$-algebras. Otherwise, taking the $3$-fold bar construction would show that there is a map $K(\mathbb{Z}, 3) \to \mathrm{SU}$ which is an isomorphism on $\mathrm{H}^3(-; \mathbb{Z})$. This is impossible: for instance, the resulting composite
$$K(\mathbb{Z}, 3) \to \mathrm{SU} \to K(\mathbb{Z}, 3)$$
would be nonzero on $\mathrm{H}^6(-; \mathbb{Z})$; but $\mathrm{H}^6(\mathrm{SU}; \mathbb{Z}) = 0$.
\end{remark}
\begin{corollary}\label{cor:cyclotomic-base}
Let $j\in \mathbb{Z}$. Then, $\mathbb{S}[x_{2j, 1}]$ admits the structure of a $\mathsf{Disk}_2^{\mathrm{BU}(1)}$-algebra in $\mathsf{Sp}^\mathrm{gr}$.
\end{corollary}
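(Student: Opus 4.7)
The plan is to obtain the statement by transporting a formal $\mathbb{E}_\infty$-algebra structure along the shearing equivalence of Proposition \ref{prop:shearing-disk2-monoidal}. Since $\mathrm{sh}_j$ is a $\mathsf{Disk}_2^{\mathrm{BU}(1)}$-monoidal self-equivalence of $\mathsf{Sp}^\mathrm{gr}$, it carries $\mathsf{Disk}_2^{\mathrm{BU}(1)}$-algebras to $\mathsf{Disk}_2^{\mathrm{BU}(1)}$-algebras; it is therefore enough to exhibit a $\mathsf{Disk}_2^{\mathrm{BU}(1)}$-algebra whose shearing by $\mathrm{sh}_j$ recovers $\mathbb{S}[x_{2j,1}]$.

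The natural candidate is the untwisted monoid algebra $\mathbb{S}[x_{0,1}]$. Its underlying graded spectrum is $\mathbb{S}[\mathbb{N}]$, which as the monoid algebra on the free commutative monoid on one generator carries a canonical graded $\mathbb{E}_\infty$-algebra structure. Equivalently, the functor $\iota_0 \colon \mathbb{Z}^\mathrm{ds} \to \mathsf{Sp}$ is the constant functor at $\mathbb{S}$, hence manifestly symmetric monoidal, and Day convolution promotes this to an $\mathbb{E}_\infty$-algebra in $\mathsf{Sp}^\mathrm{gr}$. Restricting this $\mathbb{E}_\infty$-structure produces a $\mathsf{Disk}_2^{\mathrm{BU}(1)}$-algebra structure on $\mathbb{S}[x_{0,1}]$, to which we apply $\mathrm{sh}_j$.

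It remains to identify $\mathrm{sh}_j(\mathbb{S}[x_{0,1}])$ with $\mathbb{S}[x_{2j,1}]$. The explicit formula $\mathrm{sh}_j(M)(k) \simeq M(k)[2jk]$ shows that $\mathrm{sh}_j(\mathbb{S}[x_{0,1}])$ has value $S^{2jk}$ in each nonnegative weight $k$, with multiplication the evident equivalence $S^{2jk}\otimes S^{2j\ell} \simeq S^{2j(k+\ell)}$; by the universal property this is precisely the underlying graded $\mathbb{E}_1$-ring $\mathbb{S}[x_{2j,1}]$ fixed in the Construction.

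The one point requiring attention is that this identification upgrades from the $\mathbb{E}_1$-level to the $\mathsf{Disk}_2^{\mathrm{BU}(1)}$-level refinements on both sides. I expect this to be essentially tautological rather than a genuine obstacle: the proof of Proposition \ref{prop:shearing-disk2-monoidal} constructs the $\mathsf{Disk}_2^{\mathrm{BU}(1)}$-monoidal structure on $\mathrm{sh}_j$ directly from the $\mathsf{Disk}_2^{\mathrm{BU}(1)}$-monoidal refinement of $\iota_j$, which is exactly the data used to equip $\mathbb{S}[x_{2j,1}]$ with its higher algebra structure in the first place, so the two refinements agree by design.
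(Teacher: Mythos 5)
Your proposal is correct and is essentially the paper's own argument: exhibit $\mathbb{S}[x_{0,1}]=\Sigma^\infty_+\mathbb{N}$ as an $\mathbb{E}_\infty$-algebra (hence a $\mathsf{Disk}_2^{\mathrm{BU}(1)}$-algebra by restriction) in $\mathsf{Sp}^{\mathrm{gr}}$, identify $\mathbb{S}[x_{2j,1}]\simeq \mathrm{sh}_j\,\mathbb{S}[x_{0,1}]$, and transport the structure along the $\mathsf{Disk}_2^{\mathrm{BU}(1)}$-monoidal equivalence $\mathrm{sh}_j$ of Proposition \ref{prop:shearing-disk2-monoidal}. Your closing worry about matching refinements is unnecessary, since the corollary only asserts existence of a $\mathsf{Disk}_2^{\mathrm{BU}(1)}$-structure, not compatibility with a previously fixed one.
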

\begin{proof}
Let $\mathbb{S}[x_{0,1}]=\Sigma^{\infty}_+\mathbb{N}$ denote the free $\mathbb{E}_1$-algebra in graded spectra on a class in weight $1$ and degree zero; this in fact admits the structure of an $\mathbb{E}_\infty$-ring in $\mathsf{Sp}^\mathrm{gr}$, and $\mathbb{S}[x_{2j, 1}] \simeq \mathrm{sh}_j \mathbb{S}[x_{0,1}]$. Since $\mathrm{sh}_j$ is $\mathsf{Disk}_2^{\mathrm{BU}(1)}$-monoidal by Proposition \ref{prop:shearing-disk2-monoidal}, this implies that $\mathbb{S}[x_{2j, 1}]$ admits the structure of a $\mathsf{Disk}_2^{\mathrm{BU}(1)}$-algebra in $\mathsf{Sp}^\mathrm{gr}$. \qedhere

\end{proof}
\begin{remark}
There is an evident generalization of Corollary \ref{cor:cyclotomic-base} to multi-graded $\mathsf{Disk}_2^{\mathrm{BU}(1)}$-algebras in several variables.
\end{remark}

\section{Retracts of Complex Bordism}

The following result allows us to equip $\mathrm{BP}$
with a $\mathsf{Disk}$-algebra structure.

\begin{theorem} Every $\mathbb{E}_4$-algebra map
$\mathrm{MU}_{(p)} \to \mathrm{MU}_{(p)}$ 
refines to a $\mathsf{Disk}_4^{\mathrm{BU}(2)}$-algebra map.
\end{theorem}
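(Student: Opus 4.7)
The plan is to reduce, via the universal property of $\mathrm{MU}_{(p)}$ as a Thom spectrum, to an application of Proposition \ref{prop:disk-and-orient} at the level of group-like $\mathbb{E}_4$-spaces. Recall that $\mathrm{MU}_{(p)}$ is the Thom spectrum of the $\mathbb{E}_\infty$-map $j\colon\mathrm{BU}\times\mathbb{Z}_{(p)}\to\mathrm{Pic}(\mathsf{Sp}_{(p)})$ coming from the complex $J$-homomorphism, and the $\mathsf{Disk}_4^{\mathrm{BU}(2)}$-algebra structure on $\mathrm{MU}_{(p)}$ we want to work with is the one produced from $j$ by Corollary \ref{cor:bu-thom-disk}. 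Because the Thom construction is a symmetric monoidal left adjoint to the Picard-space functor, it gives, for any $\mathcal{O}$-algebra $R$ with reference map $\mathrm{Pic}(R)\to\mathrm{Pic}(\mathsf{Sp}_{(p)})$, a natural equivalence
\[
\mathrm{Map}_{\mathcal{O}\text{-}\mathrm{alg}}(\mathrm{MU}_{(p)},R)\;\simeq\;\mathrm{Map}^{\mathcal{O}}_{/\mathrm{Pic}(\mathsf{Sp}_{(p)})}\!\bigl(\mathrm{BU}\times\mathbb{Z}_{(p)},\,\mathrm{Pic}(R)\bigr),
\]
valid both for $\mathcal{O}=\mathbb{E}_4$ and, by the same construction, for $\mathcal{O}=\mathsf{Disk}_4^{\mathrm{BU}(2)}$. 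Specializing to $R=\mathrm{MU}_{(p)}$, the theorem reduces to showing that every $\mathbb{E}_4$-map of spaces $\mathrm{BU}\times\mathbb{Z}_{(p)}\to\mathrm{Pic}(\mathrm{MU}_{(p)})$ over $\mathrm{Pic}(\mathsf{Sp}_{(p)})$ canonically refines to a $\mathsf{Disk}_4^{\mathrm{BU}(2)}$-map.

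To produce this refinement, I would invoke Proposition \ref{prop:disk-and-orient} with $n=4$, $B=\mathrm{BU}(2)$, $Y=\mathrm{BU}\times\mathbb{Z}_{(p)}$, and $X=\mathrm{Pic}(\mathrm{MU}_{(p)})=\Omega^\infty\mathrm{pic}(\mathrm{MU}_{(p)})$. The only nontrivial input required is a $\mathrm{U}(2)$-equivariant equivalence $\Sigma^{\lambda_4}\mathrm{pic}(\mathrm{MU}_{(p)})\simeq\Sigma^4\mathrm{pic}(\mathrm{MU}_{(p)})$ of local systems over $\mathrm{BU}(2)$. This is the Picard-space analogue of the $\mathrm{U}(n)$-equivariant identification $\Sigma^{\lambda_{2n}}\mathrm{ku}\simeq\Sigma^{2n}\mathrm{ku}$ used in Corollary \ref{cor:alg-over-bu}: because the map $\mathrm{BU}\times\mathbb{Z}_{(p)}\to\mathrm{Pic}(\mathrm{MU}_{(p)})$ lands in the components classifying integer shifts of the free rank-one $\mathrm{MU}_{(p)}$-module, the $\mathrm{MU}_{(p)}$-module Thom isomorphism for the tautological rank-$2$ complex bundle on $\mathrm{BU}(2)$ provides the required trivialization in a $\mathrm{U}(2)$-equivariant manner. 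Applying Proposition \ref{prop:disk-and-orient} then produces the claimed refinement.

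The main obstacle will be upgrading this pointwise Thom isomorphism to a genuine equivalence of local systems of connective spectra on $\mathrm{BU}(2)$: since $\mathrm{pic}(\mathrm{MU}_{(p)})$ is not literally an $\mathrm{MU}_{(p)}$-module, one cannot invoke the $\mathrm{MU}_{(p)}$-module Thom isomorphism wholesale. I would handle this by restricting attention to the union of components actually hit by $\mathrm{BU}\times\mathbb{Z}_{(p)}\to\mathrm{Pic}(\mathrm{MU}_{(p)})$, where the Picard data reduces to tracking $\mathrm{MU}_{(p)}$-module shifts; on those components the $\mathrm{U}(2)$-equivariant universal orientation of the rank-$2$ tautological bundle assembles into the desired local-system equivalence, in direct parallel with the proof of Corollary \ref{cor:alg-over-bu}. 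With this input in place, Proposition \ref{prop:disk-and-orient} applies to refine every $\mathbb{E}_4$-algebra map $\mathrm{MU}_{(p)}\to\mathrm{MU}_{(p)}$ to a $\mathsf{Disk}_4^{\mathrm{BU}(2)}$-algebra map.
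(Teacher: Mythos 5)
Your reduction rests on two steps, and the second one has a genuine gap. First, a smaller issue: the asserted universal property does not typecheck as written, since $\mathrm{Pic}$ is covariant in ring maps, so there is no reference map $\mathrm{Pic}(R)\to\mathrm{Pic}(\mathsf{Sp}_{(p)})$; the correct adjoint description of maps out of a Thom spectrum replaces $\mathrm{Pic}(R)$ by the space of $R$-lines equipped with trivialization data (ABGHR-style), and for the present theorem it would moreover have to be formulated $\mathrm{U}(2)$-equivariantly, compatibly with the specific $\mathsf{Disk}_4^{\mathrm{BU}(2)}$-structure on $\mathrm{MU}_{(p)}$ coming from Corollary \ref{cor:bu-thom-disk}. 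This is repairable in principle, but it makes the second step unavoidable, and that step is exactly where the argument breaks: to apply Proposition \ref{prop:disk-and-orient} with target $\mathrm{Pic}(\mathrm{MU}_{(p)})$ you need a $\mathrm{U}(2)$-equivariant equivalence $\Sigma^{\lambda_4}\mathrm{pic}(\mathrm{MU}_{(p)})\simeq\Sigma^4\mathrm{pic}(\mathrm{MU}_{(p)})$ of local systems on $\mathrm{BU}(2)$. The proof of Corollary \ref{cor:alg-over-bu} gives no traction here because it uses that $\mathrm{ku}$ is an $\mathrm{MU}$-module, whereas $\mathrm{pic}$ and $\mathrm{gl}_1$ of $\mathrm{MU}_{(p)}$ are not $\mathrm{MU}$-modules and admit no Thom isomorphism; and your proposed fix of restricting to the components hit by $\mathrm{BU}\times\mathbb{Z}_{(p)}$ does not help, since even on the component of the unit the relevant local system is built from $\mathrm{bgl}_1(\mathrm{MU}_{(p)})$, so the question is precisely whether a twist of $\mathrm{gl}_1$ is equivariantly trivial --- a genuinely hard multiplicative-orientation question that you have not addressed. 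A further warning sign is that your argument, if it worked, would produce a \emph{canonical} refinement of every $\mathbb{E}_4$-self-map, which is in tension with the paper's observation that these refinements are highly non-canonical.

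The paper avoids trivializing any equivariant twist of $\mathrm{gl}_1$ by proving only a $\pi_0$-surjectivity statement: under $\mathsf{Alg}_{\mathsf{Disk}_4^{\mathrm{BU}(2)}}\simeq\mathsf{Alg}_{\mathbb{E}_4}^{h\mathrm{U}(2)}$, the comparison map is the inclusion of $\mathrm{U}(2)$-homotopy fixed points of the $\mathbb{E}_4$-mapping space (for some $\mathrm{U}(2)$-action one need not identify), and surjectivity on components follows once the homotopy fixed point spectral sequence collapses. That collapse is obtained by computing $\mathrm{Map}_{\mathbb{E}_4}(\mathrm{MU}_{(p)},\mathrm{MU}_{(p)})\simeq\mathrm{Map}(\Sigma^{-4}\Sigma^{\infty}_+\mathrm{BU}\langle 6\rangle,\mathrm{gl}_1\mathrm{MU}_{(p)})$ via the Thom isomorphism for $\mathbb{E}_4$-maps into an $\mathrm{MU}$-algebra, and then using the even cell structure of $\mathrm{BU}\langle 6\rangle$ together with the evenness of $\pi_*\mathrm{gl}_1\mathrm{MU}_{(p)}$. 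If you want to salvage your approach, you would need to either supply the $\mathrm{U}(2)$-equivariant trivialization of the twisted $\mathrm{pic}$ (or $\mathrm{gl}_1$) local system --- which is not known to exist and is not a formal consequence of anything in the paper --- or switch to an obstruction-theoretic argument of the above kind.
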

\begin{proof} We would like to show that the map
	\[
	\mathrm{Map}_{\mathsf{Disk}_2^{\mathrm{BU}(2)}}(\mathrm{MU}_{(p)},
	\mathrm{MU}_{(p)}) \to
	\mathrm{Map}_{\mathbb{E}_4}(\mathrm{MU}_{(p)},
	\mathrm{MU}_{(p)})
	\]
is surjective on path components. Under the identification
$\mathsf{Alg}_{\mathsf{Disk}_2^{\mathrm{BU}(2)}} \simeq
\mathsf{Alg}_{\mathbb{E}_2}^{h\mathrm{U}(2)}$,
we may identify this map with the inclusion of fixed points
	\[
	\mathrm{Map}_{\mathbb{E}_4}(\mathrm{MU}_{(p)},
	\mathrm{MU}_{(p)})^{h\mathrm{U}(2)}
	\to
	\mathrm{Map}_{\mathbb{E}_4}(\mathrm{MU}_{(p)},
	\mathrm{MU}_{(p)})
	\]
for some $\mathrm{U}(2)$-action on the source.
To show that this is surjective on path components,
it will suffice to prove that the homotopy fixed
point spectral sequence for the source collapses.
For this, it further suffices to prove that
$\pi_*\mathrm{Map}_{\mathbb{E}_4}(\mathrm{MU}_{(p)},
	\mathrm{MU}_{(p)})$
is concentrated in even degrees.
Using the Thom isomorphism of \cite[Corollary 3.18]{omar-toby}, we have:
	\begin{align*}
	\mathrm{Map}_{\mathbb{E}_4}(\mathrm{MU}_{(p)},
	\mathrm{MU}_{(p)}) &\simeq
	\mathrm{Map}_{\mathbb{E}_4}(\mathrm{MU},
	\mathrm{MU}_{(p)}) \\
	&\simeq
	\mathrm{Map}_{\mathbb{E}_4}(\Sigma^{\infty}_+\mathrm{BU},
	\mathrm{MU}_{(p)}) \\
	&\simeq 
	\mathrm{Map}_*(\mathrm{BU}\langle 6\rangle,
	\mathrm{B}^4\mathrm{GL}_1\mathrm{MU}_{(p)})\\
	&\simeq
	\mathrm{Map}(\Sigma^{-4}
	\Sigma^{\infty}_+\mathrm{BU}\langle 6\rangle,
	\mathrm{gl}_1\mathrm{MU}_{(p)}).
	\end{align*}
Since $\mathrm{BU}\langle 6\rangle$ has an even
cell decomposition, and the homotopy of
$\mathrm{gl}_1\mathrm{MU}_{(p)}$ is concentrated in
even degrees, the Atiyah-Hirzebruch spectral
sequence collapses and the answer is concentrated
in even degrees. This completes the proof.
\end{proof}

\begin{corollary}\label{cor:bp-disk4}
$\mathrm{BP}$ admits the structure
of a $\mathsf{Disk}_4^{\mathrm{BU}(2)}$-algebra under
$\mathrm{MU}$.
\end{corollary}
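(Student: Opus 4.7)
The plan is to deduce the corollary from the preceding theorem by splitting the Quillen idempotent in the enriched category of $\mathsf{Disk}_4^{\mathrm{BU}(2)}$-algebras. First, recall from Basterra--Mandell that $\mathrm{BP}$ is constructed as the splitting of an $\mathbb{E}_4$-$\mathrm{MU}$-algebra idempotent $e\colon \mathrm{MU}_{(p)} \to \mathrm{MU}_{(p)}$, the Quillen idempotent. Applying the preceding theorem to the underlying $\mathbb{E}_4$-algebra endomorphism $e$ produces a $\mathsf{Disk}_4^{\mathrm{BU}(2)}$-algebra endomorphism $\tilde{e}$ of $\mathrm{MU}_{(p)}$ whose underlying $\mathbb{E}_4$-algebra map agrees with $e$ in $\pi_0$.

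Next, one forms the splitting of $\tilde{e}$ in the presentable $\infty$-category $\mathcal{A}$ of $\mathsf{Disk}_4^{\mathrm{BU}(2)}$-algebras. Since $\mathcal{A}$ is idempotent-complete and the forgetful functor to spectra preserves filtered colimits, the sequential colimit
\[
\mathrm{BP} \;\simeq\; \colim\bigl(\mathrm{MU}_{(p)} \xrightarrow{\tilde{e}} \mathrm{MU}_{(p)} \xrightarrow{\tilde{e}} \cdots\bigr)
\]
computes the splitting as a $\mathsf{Disk}_4^{\mathrm{BU}(2)}$-algebra and reproduces the usual $\mathrm{BP}$ on underlying spectra. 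Since $\mathrm{MU}$ itself carries a canonical $\mathsf{Disk}_4^{\mathrm{BU}(2)}$-algebra structure by Corollary \ref{cor:bu-thom-disk} and the localization $\mathrm{MU} \to \mathrm{MU}_{(p)}$ is the canonical unit of a symmetric monoidal localization, this map is $\mathrm{U}(2)$-fixed; composing with the splitting map exhibits $\mathrm{BP}$ as a $\mathsf{Disk}_4^{\mathrm{BU}(2)}$-algebra under $\mathrm{MU}$.

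The hard part will be to ensure that $\tilde{e}$ is a \emph{coherent} idempotent in $\mathcal{A}$: the preceding theorem promotes individual $\pi_0$-classes of endomorphisms, but does not \emph{a priori} guarantee that the compositional relation $e \circ e \simeq e$ survives, so $\tilde{e}\circ \tilde{e}$ and $\tilde{e}$ could in principle be non-equivalent lifts of $e = e \circ e$. To resolve this, one would exploit the multiplicative structure in the argument of the preceding theorem: the homotopy fixed point spectral sequence for the $\mathrm{U}(2)$-action on $\mathrm{End}_{\mathbb{E}_4}(\mathrm{MU}_{(p)})$ is a spectral sequence of rings, and its even-degree concentration implies that the ring map
\[
\pi_0 \mathrm{End}_{\mathsf{Disk}_4^{\mathrm{BU}(2)}}(\mathrm{MU}_{(p)}) \longrightarrow \pi_0 \mathrm{End}_{\mathbb{E}_4}(\mathrm{MU}_{(p)})
\]
has nilpotent kernel arising from higher Atiyah--Hirzebruch filtration. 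Idempotents therefore lift essentially uniquely, so $\tilde{e}$ is weakly idempotent, and the idempotent completeness of the presentable $\infty$-category $\mathcal{A}$ promotes this to a coherent $\mathrm{Idem}$-diagram whose splitting is the desired $\mathrm{BP}$.
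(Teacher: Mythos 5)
Your first two paragraphs follow the paper's route exactly: the paper's entire proof is ``apply the previous theorem to the $\mathbb{E}_4$-algebra idempotent of Basterra--Mandell,'' with the splitting-via-telescope mechanics you spell out left implicit, and that part of your argument is correct. Your third paragraph, however, manufactures a difficulty that is not actually there, and the way you resolve it does not hold up. The telescope $\colim(\mathrm{MU}_{(p)} \xrightarrow{\tilde{e}} \mathrm{MU}_{(p)} \to \cdots)$ exists in the presentable category of $\mathsf{Disk}_4^{\mathrm{BU}(2)}$-algebras regardless of whether $\tilde{e}$ is idempotent in any sense; since the forgetful functor preserves filtered colimits and an $\mathbb{N}$-indexed diagram is determined up to equivalence by the homotopy classes of its individual maps, the underlying spectrum of this colimit is the telescope of $e$, which is $\mathrm{BP}$ because $e$ is a homotopy idempotent of spectra. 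The corollary only asks for a $\mathsf{Disk}_4^{\mathrm{BU}(2)}$-algebra with underlying spectrum $\mathrm{BP}$ receiving a map from $\mathrm{MU}$ (which the telescope has via $\mathrm{MU}\to\mathrm{MU}_{(p)}$), not for $\mathrm{BP}$ to be a retract of $\mathrm{MU}_{(p)}$ in $\mathsf{Disk}$-algebras, so no coherent $\mathrm{Idem}$-diagram and no relation $\tilde{e}\circ\tilde{e}\simeq\tilde{e}$ is needed. Moreover, the argument you offer for lifting the idempotent is not sound as stated: $\pi_0$ of the endomorphism space of $\mathrm{MU}_{(p)}$ in $\mathbb{E}_4$- or $\mathsf{Disk}$-algebras is only a monoid under composition, not a ring, so ``nilpotent kernel'' and the classical idempotent-lifting lemma do not apply, and the even-degree collapse of the homotopy fixed point spectral sequence gives surjectivity of the comparison on $\pi_0$ (which is all the paper's theorem uses), not a uniqueness or coherence statement. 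Simply deleting that paragraph leaves a correct proof matching the paper's.
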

\begin{proof} Apply the previous theorem to
the $\mathbb{E}_4$-algebra idempotent produced
by Basterra-Mandell in \cite{basterra-mandell}.
\end{proof}

\begin{warning} Unlike the $\mathsf{Disk}$-algebra structures
produced on Thom spectra, the refinements of the self-maps
of $\mathrm{MU}$, and hence of the algebra structure on
$\mathrm{BP}$, are highly non-canonical.
\end{warning}

\section{Truncated Brown-Peterson Spectra}

In \cite{hahn-wilson}, the second and fifth authors
produced $\mathbb{E}_3$-$\mathrm{MU}$-algebra forms of
$\mathrm{BP}\langle n\rangle$. In this section,
we explain how to modify the argument in loc. cit.
to produce
$\mathsf{Disk}_3^{\mathrm{BU}(1)}$-$\mathrm{MU}$-algebra
structures.

\subsection{Review of obstruction theory} If $\mathcal{O}$
is an operad, then the deformation theory of
an $\mathcal{O}$-algebra $A$ is governed by the cotangent
complex, which is an operadic module over $A$.
In the case of interest, it follows from \cite[7.3.4.13]{ha}
that the cotangent complex of a $\mathsf{Disk}_n^B$-algebra
$A$ lies in
	\[
	\mathsf{Mod}_A^{\mathsf{Disk}_n^B}(\mathcal{C}):=
	\lim_B \mathsf{Mod}_A^{\mathbb{E}_n}(\mathcal{C}).
	\]
The category of $\mathbb{E}_n$-$A$-modules
is equivalent to the category of modules over the enveloping
algebra
	\[
	\mathcal{U}^{(n)}(A):= \int_{\mathbb{R}^n-\{0\}}A,
	\]
so an alternative perspective on $\mathsf{Disk}_n^B$-$A$-modules
is via the equivalence
	\[
	\mathsf{Mod}_A^{\mathsf{Disk}_n^B}(\mathcal{C})
	\simeq
	\mathsf{Mod}_{\mathcal{U}^{(n)}(A)}(\mathsf{Fun}(B, \mathcal{C})).
	\]

In these terms, the cotangent complex and enveloping
algebra are related to one another using the following
theorem:

\begin{theorem}[Lurie, Francis] \label{thm:en-cotangent} If $A$ is a
$\mathsf{Disk}_n^B$-algebra, then
there is a fiber sequence
	\[
	\mathcal{U}^{(n)}(A)
	\to A
	\to \Sigma^{\lambda_n}\mathbb{L}_A
	\]
of $\mathsf{Disk}_n^B$-$A$-modules.
\end{theorem}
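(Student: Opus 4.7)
The plan is to reduce to the $\mathbb{E}_n$-version of the statement (the untwisted analogue, due to Lurie and Francis) parametrized over $B$, with the twist by $\lambda_n$ arising because the relevant suspension in the pointwise fiber sequence depends on the tangential framing.

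First, using the definition $\mathsf{Mod}_A^{\mathsf{Disk}_n^B}(\mathcal{C}) = \lim_B \mathsf{Mod}_A^{\mathbb{E}_n}(\mathcal{C})$ recalled in the excerpt, it suffices to produce a system of fiber sequences over $B$ whose pointwise $n$-fold suspensions assemble into $\Sigma^{\lambda_n}$. This converts the problem into a purely $\mathbb{E}_n$-theoretic statement equipped with enough naturality in the tangent microbundle.

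Next, for a plain $\mathbb{E}_n$-algebra $A$, the cited work of Lurie and Francis provides a canonical fiber sequence of $\mathbb{E}_n$-$A$-modules
\[\mathcal{U}^{(n)}(A) \to A \to \Sigma^n \mathbb{L}_A,\]
obtained by recognizing $\mathcal{U}^{(n)}(A) = \int_{\mathbb{R}^n \setminus \{0\}} A$ and $A = \int_{\mathbb{R}^n} A$ and then computing the cofiber of the inclusion-induced map. The crucial input is that this cofiber, interpreted as a relative factorization homology on the pair $(\mathbb{R}^n, \mathbb{R}^n \setminus \{0\})$, is identified with $\Sigma^n \mathbb{L}_A$; the $\Sigma^n$ here implements the one-point compactification of $\mathbb{R}^n$ using the chosen framing.

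Finally, I would upgrade this to the parametrized setting. Francis's constructions are functorial in framings and hence produce a natural transformation of local systems on $\mathrm{BTop}(n)$, which pulls back along $B \to \mathrm{BTop}(n)$. Under this parametrization, the constant local system $\Sigma^n$ is replaced by the local system whose fiber at a tangent space $V$ is given by smashing with $V^+$; this is exactly $\Sigma^{\lambda_n}$. Taking the limit over $B$ of these pointwise fiber sequences yields the claimed fiber sequence in $\mathsf{Mod}_A^{\mathsf{Disk}_n^B}(\mathcal{C})$. The main obstacle is verifying that Francis's identification of the cofiber with $\Sigma^n \mathbb{L}_A$ is sufficiently natural in framings to promote to a statement over $\mathrm{BTop}(n)$; once this naturality is in hand, the passage to $\mathsf{Disk}_n^B$-algebras is formal from the defining limit.
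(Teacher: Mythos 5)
Your proposal is correct and matches the paper's approach: the paper's entire proof is the observation that Francis's argument for the fiber sequence $\mathcal{U}^{(n)}(A) \to A \to \Sigma^{n}\mathbb{L}_A$ "applies verbatim for general $B$," which is exactly the naturality-in-framings point you identify, with the twist $\Sigma^{\lambda_n}$ arising because the suspension is by the one-point compactification of the tangent (micro)bundle rather than a trivialized $\mathbb{R}^n$. The obstacle you flag---that Francis's identification of the cofiber must be natural in the tangential structure so that it descends to a statement of local systems pulled back along $B \to \mathrm{BTop}(n)$---is precisely what the paper's "verbatim" claim asserts, so your route and the paper's are the same argument phrased respectively as a formal limit over $B$ and as a rerun of the original proof.
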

\begin{proof} The proof given in \cite[Theorem 2.26]{francis}
applies verbatim for general $B$.
\end{proof}

\subsection{Main result}

\begin{theorem}\label{thm:bpn-disk3}
There are forms of $\mathrm{BP}\langle n\rangle$
which are 
$\mathsf{Disk}_3^{\mathrm{BU}(1)}$-$\mathrm{MU}$-algebras,
and for which the maps
	\[
	\mathrm{BP}\langle n\rangle \to
	\mathrm{BP}\langle n-1\rangle
	\]
are maps of $\mathsf{Disk}_3^{\mathrm{BU}(1)}$-$\mathrm{MU}$-algebras.
\end{theorem}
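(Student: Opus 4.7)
The plan is to adapt the Hahn--Wilson construction of $\mathbb{E}_3$-$\mathrm{MU}$-algebra forms of $\mathrm{BP}\langle n\rangle$ from \cite{hahn-wilson} to the $\mathsf{Disk}_3^{\mathrm{BU}(1)}$ setting, rerunning their obstruction-theoretic argument in the $\mathrm{BU}(1)$-equivariant category. By Remark \ref{rmk: disk algebras as a limit}, producing a $\mathsf{Disk}_3^{\mathrm{BU}(1)}$-refinement of an $\mathbb{E}_3$-$\mathrm{MU}$-algebra $A$ amounts to lifting $A$ to a homotopy fixed point for the rotation $\mathrm{U}(1)$-action on $\mathsf{Alg}_{\mathbb{E}_3}(\mathsf{Mod}_{\mathrm{MU}})$. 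The cotangent complex fiber sequence of Theorem \ref{thm:en-cotangent} applies in the $\mathsf{Disk}_3^{\mathrm{BU}(1)}$-module category
\[
\mathsf{Mod}_A^{\mathsf{Disk}_3^{\mathrm{BU}(1)}}(\mathsf{Mod}_{\mathrm{MU}}) \simeq \mathsf{Mod}_{\mathcal{U}^{(3)}(A)}(\mathsf{Fun}(\mathrm{BU}(1), \mathsf{Mod}_{\mathrm{MU}})),
\]
which packages the equivariant deformation theory as the underlying $\mathcal{U}^{(3)}(A)$-module deformation theory equipped with a $\mathrm{U}(1)$-action. The starting point $\mathrm{MU}$ is already a $\mathsf{Disk}_3^{\mathrm{BU}(1)}$-algebra by restriction from its $\mathbb{E}_\infty$-structure, so the inductive construction can begin.

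The Hahn--Wilson argument builds each $\mathrm{BP}\langle n\rangle$ stagewise in the category of $\mathbb{E}_3$-$\mathrm{MU}$-algebras, at each step reducing a deformation-theoretic obstruction to a computation in evenly concentrated homotopy groups (using even concentration of $\mathrm{MU}_*\mathrm{BP}\langle n\rangle$ and related cotangent modules). In the refined setting, each such obstruction becomes an equivariant obstruction computed by the homotopy fixed point spectral sequence
\[
E_2^{s,t} = \mathrm{H}^s(\mathrm{BU}(1); \pi_t(-)) \Rightarrow \pi_{t-s}((-)^{h\mathrm{U}(1)}).
\]
Since $\mathrm{H}^*(\mathrm{BU}(1);\mathbb{Z})$ is concentrated in even degrees, whenever the underlying obstruction group is evenly concentrated this spectral sequence collapses by parity, and the equivariant obstruction surjects onto its non-equivariant shadow on $\pi_0$. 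Thus every Hahn--Wilson vanishing implies the corresponding $\mathrm{U}(1)$-equivariant vanishing, and the inductive construction carries through in $\mathsf{Disk}_3^{\mathrm{BU}(1)}$-$\mathrm{MU}$-algebras.

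Compatibility of the quotient maps $\mathrm{BP}\langle n\rangle \to \mathrm{BP}\langle n-1\rangle$ is arranged by performing the construction for the whole tower simultaneously in the $\mathsf{Disk}_3^{\mathrm{BU}(1)}$-$\mathrm{MU}$-algebra category, applying the same equivariant obstruction theory to each successive truncation; the parity argument again ensures that the attaching maps required by Hahn--Wilson between stages admit equivariant refinements. The main obstacle will be the careful bookkeeping needed to verify that the precise obstruction groups used in \cite{hahn-wilson} are concentrated in even degrees (rather than merely zero in a given range), since it is this even concentration that drives the collapse of the homotopy fixed point spectral sequence. Once this is confirmed, the remainder of the argument is essentially a straight transport of the non-equivariant construction into the $\mathrm{BU}(1)$-local-system setting provided by the module category description above.
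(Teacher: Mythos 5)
Your proposal follows essentially the same route as the paper: rerun the Hahn--Wilson $\mathbb{E}_3$-obstruction theory using the $\mathsf{Disk}_3^{\mathrm{BU}(1)}$ cotangent-complex/enveloping-algebra description, and observe that since $\mathrm{H}^*(\mathrm{BU}(1);\mathbb{Z})$ and the relevant mapping spectra are even, the $\mathrm{U}(1)$-homotopy fixed point spectral sequence collapses, so equivariant obstructions vanish and fixed points surject onto the underlying homotopy. The ``bookkeeping'' you flag is exactly what the paper resolves by citing \cite[Theorem 2.5.5]{hahn-wilson}: the obstruction groups are homotopy fixed points of $\Sigma^{V}\mathrm{map}_{\mathcal{U}^{(3)}_{\mathrm{MU}}(\mathrm{BP}\langle n\rangle)}(\mathrm{BP}\langle n\rangle,\mathrm{BP}\langle n\rangle)$ for virtual complex representations $V$ of $\mathrm{U}(1)$ (the representation-sphere twists coming from $\Sigma^{\lambda_3}$ in the cotangent fiber sequence), and evenness of the underlying spectrum is preserved by such twists, so the parity collapse you describe applies verbatim.
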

\begin{proof} The proof in
\cite[Theorem 2.0.6]{hahn-wilson} goes through
\textit{mutatis mutandis} using the description
of the cotangent complex above, except that
we replace the use of \cite[Theorem 2.5.5]{hahn-wilson}
with the following refinement: for any virtual complex
representation $V$ of $\mathrm{U}(1)$, the
spectrum
	\[
	\left(\Sigma^V\mathrm{map}_{\mathcal{U}^{(3)}_{\mathrm{MU}}
	(\mathrm{BP}\langle n\rangle)}(\mathrm{BP}\langle n\rangle,
	\mathrm{BP}\langle n\rangle)\right)^{h\mathrm{U}(1)}
	\]
has homotopy groups concentrated in even degrees;
moreover, the map
	\[
	\pi_*\left(\Sigma^V\mathrm{map}_{\mathcal{U}^{(3)}_{\mathrm{MU}}
	(\mathrm{BP}\langle n\rangle)}(\mathrm{BP}\langle n\rangle,
	\mathrm{BP}\langle n\rangle)\right)^{h\mathrm{U}(1)}
	\to
	\pi_*
	\mathrm{map}_{\mathcal{U}^{(3)}_{\mathrm{MU}}
	(\mathrm{BP}\langle n\rangle)}(\mathrm{BP}\langle n\rangle,
	\mathrm{BP}\langle n\rangle)
	\]
is surjective.

In fact, this statement is an immediate consequence
of Theorem 2.5.5. in \textit{loc. cit.}: since the homotopy
groups of $\mathrm{map}_{\mathcal{U}^{(3)}_{\mathrm{MU}}
	(\mathrm{BP}\langle n\rangle)}(\mathrm{BP}\langle n\rangle,
	\mathrm{BP}\langle n\rangle)$
(and hence of any even suspension)
are concentrated in even degrees, the homotopy fixed
point spectral sequence for $\mathrm{U}(1)$ collapses
and is again concentrated in even degrees.
\end{proof}

\bibliographystyle{amsalpha}
\bibliography{Bibliography}

\end{document}